\documentclass[12pt,reqno]{amsart}
\usepackage[T1]{fontenc}
\usepackage[utf8]{inputenc}
\usepackage[english]{babel}
\usepackage{amsmath}
\usepackage{amsfonts}
\usepackage{amssymb}
\usepackage{amsthm}
\usepackage{bm}
\usepackage{braket}
\usepackage{graphicx}
\usepackage{marginnote}
\usepackage[margin=1.5in]{geometry}
\usepackage[colorlinks=true, pdfstartview=FitV, linkcolor=darkblue, citecolor=darkred, urlcolor=cyan]{hyperref}
\usepackage{dsfont} 
\usepackage{pxfonts}

\usepackage{microtype}
\usepackage{color}
\usepackage{csquotes}

\definecolor{darkred}{RGB}{203,65,84}
\definecolor{darkblue}{RGB}{70,130,180}
\definecolor{brown}{RGB}{139,69,19}

\newtheorem{theorem}{Theorem}[subsection]
\newtheorem{lemma}[theorem]{Lemma}
\newtheorem{proposition}[theorem]{Proposition}
\newtheorem{corollary}[theorem]{Corollary}

\newtheorem{definition}[theorem]{Definition}

\newcommand{\T}{\ms T}
\renewcommand{\leq}{\leqslant}

\renewcommand{\epsilon}{\varepsilon}
\newcommand{\mG}{\ms G}
\newcommand{\mH}{\ms H}

\newcommand{\psld}{\mathsf{PSL}_2(\mathbb R)}
\newcommand{\hh}{{\bf H}^2}

\newcommand{\ms}{\mathsf}

\newcommand{\mc}{\mathcal}
\newcommand{\id}{\operatorname{Id}}

\newcommand{\defeq}{\coloneqq}
\newcommand{\eqdef}{\eqqcolon}

\renewcommand{\d}{{\rm d}}

\newcommand{\bX}{\partial_\infty X}

\newcommand{\mK}{\mathsf K}

\newcommand{\im}{\operatorname{Im}}
\newcommand{\Hom}{\operatorname{Hom}}
\newcommand{\Iso}{\operatorname{Iso}}

\newcommand{\bP}{{\mathbf P}}
\newcommand{\cG}{\mathcal G}

\newcommand{\Zreg}{Z_{\tiny{reg}}}

\newcommand{\uu}{\mathsf u}
\newcommand{\mU}{\mathsf U}

\newcommand{\seq}[1]{ \{{#1}_p\}_{p\in\mathbb N}}
\newcommand{\seqp}[1]{ \{{#1}\}_{p\in\mathbb N}}
\newcommand{\seqt}[1]{ \{{#1}\}_{t\in\mathbb R}}

\newcounter{foo}
\newtheorem{theo}[foo]{Theorem}

\title{The topological  Ax--Lindemann theorem\\ for the complex hyperbolic space} 

\author[F. Labourie]{Fran\c cois Labourie}
\address{EPF Lausanne, SB-SCI-FL, Station 8, CH-1015 Lausanne,  Switzerland}
\email{francois.labourie@epfl.ch}
\author[S. Mozes]{Shahar Mozes}
\address{The Hebrew University of Jerusalem, Einstein Institute of Mathematics, Givat Ram. Jerusalem, 9190401, Israel}
\email{mozes@math.huji.ac.il}
\thanks{This material is based upon work supported by the National Science Foundation under Grant No. DMS-1928930, while the two authors were in residence at the Simons Laufer Mathematical Sciences Institute in Berkeley, California, during the first semester of 2026. F.~L. is supported by the Swiss State Secretariat for Education, Research and Innovation (SERI): MB25.00031.  }

\begin{document}
\renewcommand{\theenumi}{(\roman{enumi})}%
\maketitle
\begin{abstract}
	In the case of the complex hyperbolic space, we establish a version of the hyperbolic Ax—Lindemann Theorem for the topological closure rather than the Zariski closure.  Our approach combines classical differential geometry with homogeneous dynamics.
\end{abstract}
\section*{Introduction}
A {\em bounded symmetric domain} $X$ is a bounded open convex set in $\mathbb C^n$ whose group $\mG$ of biholomorphisms acts transitively. Bounded symmetric domains are classified: they are symmetric spaces of non compact types isometric to $\mG/\mK$ where $\mG$ is semisimple, and whose maximal compact subgroup $\mK$ has a center of positive dimension.

When $\Gamma$ is a torsion free lattice in $\mG$, let us denote by $\pi$  a projection from $X$ to $\Gamma\backslash X$. Our main result is when $\mG$ has rank 1, or equivalently $X$ is the unit ball in $\mathbb C^n$. We show the following result

\begin{theo} Let $\Gamma$ be a uniform lattice in the group of biholomorphisms of the unit ball $X$ in $\mathbb C^n$. Let $Z$ be a compact irreducible complex manifold (with boundary) and $f$ an holomorphic map with values in $\mathbb C^n$, such that 
	\begin{enumerate}
		\item $f$ is an immersion somewhere
		\item $f(\partial Z)\cap \bar X=\emptyset$
		\item $f(Z)\cap X\not=\emptyset$ , 
	\end{enumerate}
	Then denoting $\pi$ the projection from $X$ to $\Gamma\backslash X$, the topological closure of the complex analytic variety  $\pi(f(Z)\cap X)$ is a closed totally geodesic hermitian subspace of $\Gamma\backslash X$.
\end{theo}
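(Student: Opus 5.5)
The strategy is to blow up the analytic set $V=f(Z)\cap X$ near the boundary sphere $\partial X$. Since the complex structure of $X$ is preserved by the isometric action of $\mG=\mathsf{PU}(n,1)$, each rescaled limit of $V$ is again a complex analytic subvariety of $X$. The limits produced this way are totally geodesic complex subspaces, and homogeneous dynamics (Ratner's theorem) then controls their projections to $\Gamma\backslash X$.

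\textbf{Step 1: the boundary of $V$.} Hypotheses (ii) and (iii) say that $f(Z)$ crosses the sphere $\partial X$ while $f(\partial Z)$ stays away from $\bar X$. Hence $V$ is a closed, properly embedded analytic subset of $X$ of positive dimension $k$ (immersion somewhere), and its closure in $\bar X$ meets $\partial X$ along $f(Z)\cap\partial X$. Near a generic point $\xi$ of this intersection, $f(Z)$ is a smooth complex $k$-dimensional germ transverse to the sphere; transversality holds generically because $f(Z)\cap\bar X$ is compact and reaches $\partial X$.

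\textbf{Step 2: rescaling.} Choose points $x_p\in V$ with $x_p\to\xi$, and elements $g_p\in\mG$ with $g_p x_p=o$, a fixed base point, chosen of the form $k_p a_{t_p}$ (Cartan/horospherical decomposition, $t_p\to\infty$). In the Siegel model, $a_t$ acts by the anisotropic dilation $(z,w)\mapsto(e^{t}z,e^{2t}w)$. A transverse $k$-dimensional complex germ at $\xi$ is a graph over its tangent space, and under $a_{t_p}$ it converges in $C^\infty$ on compacts to the intersection of $X$ with a complex affine $k$-plane, namely the limit of the complex tangent directions. In the Siegel model these are complex totally geodesic copies of $\mathbf H^k_{\mathbb C}$, provided the tangent plane of the germ has the appropriate position relative to the contact distribution; otherwise one can arrange this by choosing $\xi$ generically. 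This is the "classical differential geometry" input, and the key point is that the quadratic Taylor term of the germ is killed by the anisotropic scaling. Thus, in the Grassmann bundle $\mathcal G_k$ of complex $k$-planes over $\Gamma\backslash X$, the frames of $\pi(V)$ at $\pi(x_p)$ accumulate on frames tangent to totally geodesic complex $k$-subspaces.

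\textbf{Step 3: dynamics and the main obstacle.} Let $H\cong \mathsf U(k,1)$ be the stabilizer of a totally geodesic $\mathbf H^k_{\mathbb C}$, and write $\Omega$ for the closure of $\pi(V)$. I would show that $\Omega$ is $H$-saturated in the following sense: the lift $\widehat\Omega\subset\Gamma\backslash\mG$ (frames adapted to $V$, closed up) contains entire orbits $\Gamma g H$. Ratner's orbit closure theorem, valid here since $H$ is generated by unipotents up to compact factors, then makes each $\overline{\Gamma gH}$ equal to $\Gamma g L$ for a closed subgroup $L\supset H$. Since the intermediate subgroups between $\mathsf U(k,1)$ and $\mathsf{PU}(n,1)$ are the $\mathsf U(m,1)$ up to compact factors, $\pi(gL)$ is a closed totally geodesic complex subspace.

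The hard part is to reverse the direction of the argument: to show that $\Omega$ is contained in, and in fact equals, one such closed subspace, not merely that it contains one. For this I would take $W$ to be the smallest closed totally geodesic complex subspace containing $\Omega$; by Ratner's countability of intermediate orbits this is well defined. I would then use the fact that every point of $\Omega$ is approached by rescalings along $V$, because $V$ escapes to $\partial X$ in every direction of its boundary. This should force the limiting $H$-orbits to equidistribute into $W$, via the Mozes–Shah theorem on limits of homogeneous measures, giving $\Omega\supseteq W$. The inclusion $\Omega\subseteq W$ needs uniformity in the choice of $\xi$ in Step 1, and this is where I expect the technical difficulty: every point of $\pi(V)$, not only generic boundary germs, must lie in the generated subspace. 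To handle this I would argue by analytic continuation. Both $V$ and the preimage of $W$ are analytic, and $Z$ is irreducible, so if an open piece of $f(Z)\cap X$ lies in a lift of $W$ then all of it does. Cocompactness of $\Gamma$ ensures that the orbit closures are compact and that the accumulation limits exist.
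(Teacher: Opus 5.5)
There is a genuine gap, and it is at the step you flag yourself: the density inclusion $\Omega\supseteq W$ is not proved.

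Your Step 2 rescales with the diagonal elements $a_t$, i.e.\ along geodesic-flow orbits. Ratner's theory says nothing about the closures of such orbits in $\Gamma\backslash\mG$. The limiting planes $hP$ therefore come from uncontrolled $\omega$-limits and keep no information about the boundary point $\xi$. Applying Ratner afterwards to each $H$-orbit only gives $\Omega\supseteq\pi(N_h)$ for some closed totally geodesic complex subspace $N_h\supseteq hP$. Nothing in your argument excludes that every such $N_h$ is a proper closed subspace of $W$ (for instance a closed complex geodesic when $k=1$).

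Invoking Mozes--Shah does not fix this. You name no sequence of homogeneous measures, and such a result could only raise the dimension if you already knew that infinitely many distinct $N_h$, or a non-closed $hP$, occur. That is the same missing genericity statement in another form.

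Your other worry, $\Omega\subseteq W$, is a non-issue. It holds by the definition of $W$, or, as in the paper, by first replacing $X$ with the lowest-dimensional complex totally geodesic subspace $N_0$ with closed projection containing $f(Z)\cap X$. If instead $W$ is meant to be generated by the limit planes, your analytic-continuation argument has nothing to act on, since no open piece of $V$ is shown to lie in a lift of $W$.

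The missing idea is a generic choice of the boundary point combined with a \emph{unipotent} rescaling; the paper proceeds as follows.
\begin{enumerate}
\item Regular boundary points (immersive, transverse to $\bX$) exist and form a real hypersurface $P$ of $f^{-1}(\bX)$. This needs proof: the paper uses Stokes' formula for $\beta\wedge\omega^{m-1}$ together with Hardt's uniform bound on the volume of the level sets $\{\Vert f\Vert=r\}$. Your ``compactness'' justification of generic transversality does not establish it.
\item For each of the countably many proper $N$ with closed projection, $\partial_\infty N$ meets $P$ in a nowhere dense set; otherwise analytic continuation puts $f(Z)$ inside $N$. Baire's theorem then gives a regular $\xi$ outside every $\partial_\infty N$.
\item After slicing down to curves, take the one-parameter unipotent group $\mU=\{\uu_t\}$ fixing $\xi$ and preserving the tangent complex line $L$. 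In the normal form $z\mapsto(z,z^2f_2(z),\ldots,z^2f_n(z))$ one has $\uu_{-t}\circ f\circ\uu_t\to\id$ on compacta of $L\cap X$. Hence horospherical limits of $f(Z)$ contain those of $L$.
\item Ratner's theorem applied to $\mU$ itself makes the $\mU$-orbit closure in $\Gamma\backslash\cG$ a closed orbit of a reductive group $\mG_2\supseteq\mU$.
\item By Jacobson--Morozov, $\mG_2$ preserves a complex totally geodesic $N$ with closed projection, and $\xi\in\partial_\infty N$ because $\mU$-orbits converge to $\xi$. The choice of $\xi$ forces $N=N_0$, which gives density.
\end{enumerate}
This link between the orbit closure and the boundary point is exactly what your diagonal rescaling loses.

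Incidentally, your caveat about the position of the tangent plane relative to the contact distribution is unnecessary. In a projective model every complex affine plane meeting $X$ is totally geodesic, so transversality to $\bX$ is all that is needed.
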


The general  hermitian case is of  interest for algebraic geometers and number theorists due to its proximity to the André--Oort problem and is then called the Hyperbolic Ax--Lindenmann Theorem. In this context, several authors have exploited  the fact that \(\Gamma \backslash X\) has the structure of an algebraic manifold and studied the Zariski closure of the projection of an affine algebraic submanifold of \(\mathbb{C}^n\).  Related papers by Bruno Klingler, Ngaiming Mok, Jonathan Pila,  Jacob Tsimerman, Emmanuel Ullmo and Andrei Yafaev address this question and provide a complete solution, as detailed in \cite{Ullmo:2014aa,Klingler:2016aa,Mok:2019aa}.

We underline that in our result the algebraic structure on the quotient plays  no role and our result is stronger in the complex hyperbolic space:  we take the topological closure of the projection, and we do not assume any algebraicity on $Z$ nor $f$. More precisely, by Heisuke Hironaka's results \cite{Hironaka:1964aa,Hironaka:1964ab}, every algebraic variety over $\mathbb C$ admits a resolution of singularities. It follows that our result recovers the original Ax--Lindemann theorem in complex hyperbolic space, for uniform lattices, about the Zariski closure of the projection of an algebraic variety, since the Zariski closure contains the topological closure and closed totally geodesic hermitian spaces are algebraic.

However the result that we obtain for complex analytic maps cannot be true in the general hermitian case. Here is a simple counterexample: we take $X=\hh\times \hh$ to be the polydisk in $\mathbb C^2$. We now take the (non-irreducible) $\Gamma=\Gamma_1\times\Gamma_2$ in the isometry group of $X$ containing  $\psld\times\psld$. Then we take a holomorphic map $f$ from the disk of radius $2$ in $\mathbb C$ to the disk of radius 1 whose image is included in a small ball in the fundamental domain of $\Gamma_2$, thus projecting to a strict compact subset $K_2$ of $S_2=X/\Gamma_2$. We finally take $Y$ to be the graph of $f$. Then $\partial Y$ is disjoint from the closure of $X$ and $\pi(Y)$ is not dense in $\Gamma\backslash X$ since it is included in $(X/\Gamma_1\times K_1)$. However it could be  that apart from this trivial counterexample our topological techniques can be applied.

We thank Emmanuel Ullmo for bringing the problem to our attention, as well as  Bruno Klingler, Fanny Kassel, Hee Oh, Peter Sarnak, Jonathan Pila Nimish Shah, and  Mathew Stover for their interest in our work.

\tableofcontents
\section{The complex hyperbolic space and its models}

Recall the construction of projective model of the complex hyperbolic space: let $E$ be a complex vector space of dimension $n+1$ equipped with an hermitian metric of signature $(1,n)$. 

The complex hyperbolic space is the space $X$  of lines in $E$ on which $q$ is positive definite. We see $X$ as an open set in $\mathbf P(E)$. The group of $\mathsf G=\mathsf{PU}(q)$ acts by biholomorphisms on $X$. Conversely every biholomorphism of $X$ belongs to $\mG$.

Let us denote by $\partial_\infty X$ the set of complex lines on which the restriction of $q$ is zero. The set $\partial_\infty X$ is a smooth submanifold. At a point $x$, the maximal complex subspace of $\mathsf T_x\partial_\infty X$ is the orthogonal $x^\perp$ of $x$ with respect to $q$.

Observe also that every vector space passing through a space like line is non degenerate and thus corresponds to a totally geodesic subspace of $X$.

From this description, $X$ comes equipped with a canonical hermitian metric. Indeed we have the identification 
$$
\mathsf T_xX=\Hom(x, x^\perp)\ ,
$$ 
where on the left-hand side we see $x$ as a space like line in $E$ and $x^\perp$ is is orthogonal with respect to $q$. Since the restriction of $q$ to both  $x$ and $x^\perp$ is definite,  $q$ gives rise to  a definite hermitian  metric on $\mathsf T_xX$, which is invariant by $\mathsf G$.

By construction, if $\bP(F)$ is a projective subspace of $\bP(E)$ intersecting $X$, then $\bP(F)\cap X$ is a complex totally geodesic subspace of $X$, and every complex totally geodesic subspace of $X$ is of this form.

Finally observe

\begin{lemma}\label{lem:trans} Let $m$ be an integer.
The group $\mathsf G$ acts transitively on the space  $(L,x)$ of pairs where $L$ is complex projective space of dimension $m$  and $x$ is a point in $L\cap\partial_\infty X$ at which $L$ intersects $\partial_\infty X$ transversely.
\end{lemma}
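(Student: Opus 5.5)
The strategy is to convert the pair $(L,x)$ into linear-algebraic data in $E$ and then apply Witt's theorem. Write $L=\bP(F)$ with $F$ a complex subspace of $E$ of dimension $m+1$, and $x=\bP(\ell)$ with $\ell\subset F$ an isotropic line. The first step is to show that transversality of $L$ and $\partial_\infty X$ at $x$ is equivalent to $\ell$ not lying in the kernel of $q|_F$, that is, $F\not\subset \ell^\perp$. Indeed $\partial_\infty X$ is the real hypersurface $\{q=0\}$. For $v\in\ell$, its tangent space at $x$ is identified with $\{u\in\Hom(\ell,E/\ell): \mathrm{Re}\, q(v,u(v))=0\}$. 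The tangent space of $L$ at $x$ is $\Hom(\ell,F/\ell)$, a complex subspace. A complex subspace lies inside this real hyperplane iff it lies in its maximal complex subspace $\Hom(\ell,\ell^\perp/\ell)$, i.e. iff $F\subset \ell^\perp$. Since $\partial_\infty X$ has real codimension one, transversality means $\mathsf T_xL\not\subset \mathsf T_x\partial_\infty X$, i.e. $F\not\subset\ell^\perp$.

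The second step is a normal form. Given transversality, choose $w\in F$ with $q(v,w)=1$ for a fixed $v\in \ell$. Then $P=\mathrm{span}(v,w)$ is a hyperbolic plane of signature $(1,1)$. Adjusting $w$ by a multiple of $v$, we can take $w$ isotropic. Hence $F=P\oplus (P^\perp\cap F)$. Since $E=P\oplus P^\perp$ with $P^\perp$ negative definite of dimension $n-1$, the complement $F\cap P^\perp$ is a negative definite subspace of dimension $m-1$. So $q|_F$ is nondegenerate of signature $(1,m)$, independently of the pair.

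The third step builds the group element. Given two pairs $(F,v)$ and $(F',v')$, the normal forms give bases $(v,w,e_1,\dots,e_{m-1})$ of $F$ and $(v',w',e'_1,\dots,e'_{m-1})$ of $F'$ with identical Gram matrices. The resulting isometry $F\to F'$ sending $v\mapsto v'$ extends to $E$ by Witt's theorem. Concretely, $F^\perp$ and $F'^\perp$ are both negative definite of dimension $n-m$, so one can just pick orthonormal bases of them. This gives an element of $\mathsf U(q)$, whose image in $\mathsf G=\mathsf{PU}(q)$ sends $(L,x)$ to $(L',x')$. The case $m=0$ is vacuous, since a point cannot meet $\partial_\infty X$ transversally in a positive-dimensional manifold. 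The case $m=n$ is covered by the same argument, with $F=E$.

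The only real obstacle is the first step: correctly identifying the tangent space of $\partial_\infty X$ and checking that transversality amounts to $F\not\subset \ell^\perp$. This is where the paper's remark applies, namely that the maximal complex subspace of $\mathsf T_x\partial_\infty X$ is $x^\perp$. It gives $\mathsf T_xL\subset \mathsf T_x\partial_\infty X \iff \mathsf T_xL\subset x^\perp$ (as a subspace of $\Hom(x,E/x)$) directly. Everything after that is linear algebra.
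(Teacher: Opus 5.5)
Your proof is correct and follows essentially the same route as the paper: you characterise admissible pairs as $L=\bP(F)$ with $q\vert_F$ of signature $(1,m)$ and $x$ an isotropic line in $F$, then conclude by linear algebra (Witt's theorem). The differences are minor: you actually justify the transversality criterion $F\not\subset x^\perp$, which the paper only asserts, and you move $F$ and the isotropic line at once via the adapted basis $(v,w,e_1,\dots,e_{m-1})$, whereas the paper first uses transitivity on such $F$ and then transitivity of the stabiliser of $F$ on the isotropic lines of $L$.
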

\begin{proof}
	 Write $L=\bP(F)$; then $(L,x)$  a pair such as in the lemma, if and only if the restriction of $q$ to $F$ has signature $(1,m)$ and $x$ is a lightlike line in $F$. Observe that  $\mG$ acts transitively on the space of $F$ such that $q\vert_F$ has signature $(1,m)$, and the stabiliser of $F$ in $\mG$ acts transitively on the space of lightlike lines of $L$. The result follows.
\end{proof}
\subsection{The unit ball model}\label{sec:unitballmodel}
Let $x$ be point in $X$, let $u$  be a vector in the line  $x$ such that $q(u)=1$. Observe that the restriction of $q$ to $x^\perp$ is negative definite. Let $H$ be a complex  affine hyperplane parallel to $x^\perp$ and containing $u$. We equip $H$ with the hermitian metric $-q$. We  consider the holomorphic affine chart $\psi$ from $H$ to $\bP(E)$ such that $\psi(v)$ is the complex line generated by $v$. Then $X$ is bihomolorphic to the unit ball $B=\psi^{-1}(X)$ in $H$. 

Choosing a hermitian basis in $x^\perp$, hence identifying $H$ with $\mathbb C^n$, we have Thus identified biholomorphically $X$ with the unit ball in $\mathbb C^n$. This construction is the {\em unit ball model of the complex hyperbolic space}.
  
\subsection{A upper half plane model and the unipotent group}

An upper half plane model is a biholomorphism of $X$ with 
$$
\mathcal H\defeq \left\{(z_1,\ldots,z_n)\in\mathbb C^n\ \mid \Re(z_1)+\sum_{j=2}^n z_i\overline{z_j}<0\right\}\ .
$$
Let 
$$
\mathcal P\defeq \left\{(z_1,\ldots,z_n)\in \mathbb C^n\ \mid \Re(z_1)+\sum_{j=2}^n z_i\overline{z_j}=0\right\}\ .
$$
Observe that $\mathcal P$ is a smooth submanifold and that if $u=(u_1,\ldots, u_n)$ belongs to $\mc P$
$$
\T_u\mathcal P=\left\{(v_1,\ldots,v_n)\mid \Re(v_1)+\sum_{j=2}^n \Re(v_j\bar u_j)=0\right\} \ .
$$
In the upper plane model, let us consider the
map defined for $t$ in $\mathbb R$ by 
$$\uu_t\ :\ (z_1,\ldots,z_n)\mapsto\frac{1}{1+itz_1}(z_1,\ldots,z_n)\ .
$$
Then we have
\begin{proposition}\label{pro:unip}
	The transformation $\uu_t$ is well defined on $\mc H$, is a unipotent in $\mG$, preserves $(0,\ldots,0)$ the complex line $L$ defined by the equations $z_2=\ldots=z_n=0$. The set $\mU=\seqt{\uu_t}$ is a one-parameter unipotent subgroup of $\mG$.
	
	Moreover $\uu_t$ preserves the family of euclidean normal to $L$
	$$
	P_{z_1}=\{(z_1,u_2,\ldots,u_n)\mid u_i\in \mathbb C\},
	$$
	for $z_1$ with $\Re(z_1)<0$.
\end{proposition}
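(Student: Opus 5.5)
The plan is to realize $\mc H$ as an affine chart of the projective model, so that $\uu_t$ becomes a linear map of $E$ that visibly preserves $q$. All claims then reduce to short computations. I read the defining inequality of $\mc H$ as $\Re(z_1)+\sum_{j\geq 2}|z_j|^2<0$, which is what the formula for $\T_u\mc P$ indicates.

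First I choose the model. Take $E=\mathbb C^{n+1}$ with coordinates $(w,z_1,\ldots,z_n)$ and the hermitian form
$$
q(w,z)=-\Re(z_1\bar w)-\sum_{j=2}^n|z_j|^2 .
$$
This form has signature $(1,n)$: it is an indefinite rank-$2$ form on the $(w,z_1)$-plane plus a negative definite part. In the affine chart $w=1$, a point $(1,z)$ spans a positive line exactly when $\Re(z_1)+\sum_{j\geq2}|z_j|^2<0$. Dividing by $|w|^2$ shows that every positive line meets this chart, so the chart identifies $X$ with $\mc H$ biholomorphically. Under this identification, $\partial_\infty X$ minus the point at infinity corresponds to $\mc P$.

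Next I lift $\uu_t$ to the linear map $\hat\uu_t(w,z_1,\ldots,z_n)=(w+itz_1,z_1,\ldots,z_n)$. On the chart it gives $(1,z)\mapsto(1+itz_1,z)$, and after renormalizing $w$ to $1$ this is exactly $z\mapsto z/(1+itz_1)$.

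Several facts then follow directly.
\begin{enumerate}
\item $\hat\uu_t$ preserves $q$, because $\Re\bigl(z_1\overline{(w+itz_1)}\bigr)=\Re(z_1\bar w)+\Re(-it|z_1|^2)=\Re(z_1\bar w)$. Hence $\uu_t\in\mG$, and it maps $X$ to itself.
\item $\uu_t$ is well defined on $\mc H$. The denominator $1+itz_1$ vanishes only if $t\neq0$ and $z_1=i/t$, which has $\Re(z_1)=0$; such a point is not in $\mc H$.
\item $\hat\uu_t=\id+tN$, where $N$ sends $e_{z_1}$ to $ie_w$ and kills the other basis vectors. Since $N^2=0$, each $\uu_t$ is unipotent and $\hat\uu_t\hat\uu_s=\hat\uu_{t+s}$. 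So $\mU$ is a one-parameter unipotent subgroup, nontrivial because $N\neq0$.
\item The point $z=0$ is fixed.
\item If $z_2=\cdots=z_n=0$ then the image has the same property, so $L$ is preserved.
\end{enumerate}

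Finally, for the fibres $P_{z_1}$: the first coordinate of $\uu_t(z)$ is $z_1/(1+itz_1)$, which depends only on $z_1$. Hence $\uu_t$ maps $P_{z_1}\cap\mc H$ into $P_{z_1'}$ with $z_1'=z_1/(1+itz_1)$. The point $z_1'$ satisfies $\Re(z_1')<0$, because $(z_1,0,\ldots,0)\in L\cap\mc H$ is sent to $(z_1',0,\ldots,0)\in\mc H$. Thus the family $\{P_{z_1}\}_{\Re z_1<0}$ is permuted by $\uu_t$, and the restriction of $\uu_t$ to each fibre is the scalar dilation by $(1+itz_1)^{-1}$. The only step requiring care is choosing the normalization of $q$ so that the chart matches $\mc H$ exactly; once that is set, everything else is a one-line check.
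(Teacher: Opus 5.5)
Your proof is correct, but it reaches the group-theoretic claims by a different route from the paper's. The paper works directly in the affine coordinates of $\mc H$. It gets $\uu_t(\mc H)\subset\mc H$ from $\Re\bigl(z_1/(1+itz_1)\bigr)=\Re(z_1)/|1+itz_1|^2$, leaving implicit that the same factor $|1+itz_1|^{-2}$ multiplies the terms $|z_j|^2$. From $\uu_t\circ\uu_{-t}=\id$ it deduces that $\uu_t$ is a biholomorphism of $\mc H$, and it then invokes the fact that every biholomorphism of $X$ lies in $\mG$. Unipotence and the group law come from conjugating by the involution $\Psi$, which turns $\uu_t$ into the translation $(z_1,\ldots,z_n)\mapsto(z_1+it,z_2,\ldots,z_n)$.

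You instead fix an explicit form $q$ for which $\mc H$ is an affine chart of the projective model, and you lift $\uu_t$ to $\hat\uu_t=\id+tN$ with $N^2=0$. Membership in $\mathsf{PU}(q)$, invariance of $\mc H$, unipotence and $\uu_t\uu_s=\uu_{t+s}$ then become one-line algebraic checks, with no appeal to the rigidity of biholomorphisms of the ball.

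The two computations describe the same map in two charts. In your homogeneous coordinates, $\Psi$ is the swap $[w:z_1:z_2:\cdots:z_n]\mapsto[z_1:w:z_2:\cdots:z_n]$. In the chart $z_1=1$, your $\hat\uu_t$ reads $(w,z_2,\ldots,z_n)\mapsto(w+it,z_2,\ldots,z_n)$, which is exactly the paper's translation. The paper's version avoids setting up the identification of $\mc H$ with the projective model, since it takes that identification as given. Yours proves the identification along the way and makes the invariance of $\mc H$ fully explicit.

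One small precision: every positive line meets the chart $w=1$ because $w=0$ forces $q=-\sum_{j\geq 2}|z_j|^2\leq 0$. Dividing by $|w|^2$ only handles lines with $w\neq 0$.
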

\begin{proof}
	For every point $z=(z_1,\ldots,z_n)$ in $\mc H$, $\Re(z_1)<0$. Thus $1+itz_1\not=0$ and $\uu_t(z)$ is well defined.
	Observe now that 
	$$
	\Re\left(\frac{z_1}{1+iz_1t}\right)=\frac{1}{\Vert 1+iz_1t\Vert^2}\Re(z_1)\ .
	$$
	Thus $\uu_t(\mc H)\subset \mc H$. Moreover $\uu_t\circ \uu_{-t}=\id$, thus $\uu_t$ is a biholomorphism of $\mc H$ hence an element of $\mG$.
	
	Let us consider the projective involution $\Psi$, given by 
	$$
\Psi(z_1,\ldots,z_n)=\frac{1}{z_1}\left(1,z_2,\ldots,z_n\right)\ ,
	$$
	Then one checks that 
\begin{align}
\Psi^{-1}\uu_t\Psi(z_1,\ldots,z_n)&=\Psi^{-1}\uu_t\left(\frac{1}{z_1},\frac{z_2}{z_1},\ldots,\frac{z_n}{z_1}\right)
\\&=
\Psi^{-1}\left(\frac{1}{1 +\frac{it}{z_1}}\left(\frac{1}{z_1},\frac{z_2}{z_1},\ldots,\frac{z_n}{z_1}\right)\right)
\\&=
\Psi^{-1}\left(\frac{1}{z_1 +it}\left(1,z_2,\ldots,z_n\right)\right)
\\&=(z_1+it, z_2,\ldots,z_n)\ .\end{align}

After this change of coordinates, one then see that $\mU$ is a one parameter subgroup of unipotent elements.  
	
Obviously $\uu_t$ fixes zero and preserves $L$. Finally the fact the the family of euclidean normal to $L$ comes from the fact that 
	$$
	\uu_t(P_{z_1})= P_{\frac{z_1}{1+iz_1t}}
	$$	
	This concludes the proof.	
\end{proof}
We have to understand groups that contain $\mU$.

\begin{proposition}\label{prop:uug2}
	Let $\mG_2$ be a reductive subgroup of $\mG_0$ containing $\mU$. Then the noncompact part of $\mG_2$ is conjugated to $\mathsf{SU}(1,m)$ for some $m$, moreover $\mG_2$ preserves a unique complex totally geodesic subspace $N$ of dimension $m$ in $X$, and $\partial_\infty N$ contains $0$. 
\end{proposition}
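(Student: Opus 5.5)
The approach is to reduce to the unique noncompact simple factor of $\mG_2$. I would use the rank one structure of $\mG$ to identify that factor, and then use the specific shape of the unipotent $\uu_t$ to rule out the real hyperbolic case.

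\emph{Reduction to a simple factor.} Write $\mG_2=\mathsf Z\cdot \mathsf S_1\cdots \mathsf S_k$, with $\mathsf Z$ the connected center and $\mathsf S_i$ the simple factors. A unipotent element of a reductive real algebraic group lies in its derived group. Its image in any compact factor is a unipotent element of a compact group, hence trivial. Thus $\mU$ lies in the product $\mathsf S$ of the noncompact simple factors. Since $\mG$ has real rank one, $\mathsf S$ has real rank at most one. It is nontrivial because it contains $\mU$, so it is a single simple group of real rank one.

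\emph{The invariant subspace.} By the Karpelevich--Mostow theorem, $\mathsf S$ has a totally geodesic orbit $N$ in $X$. Totally geodesic submanifolds of complex hyperbolic space are either complex, $N=\bP(F)\cap X$, or totally real, $N\simeq {\bf H}^k_{\mathbb R}$. Accordingly $\mathsf S$ is, modulo a compact kernel, either $\mathsf{SU}(1,m)$ or $\mathsf{SO}(1,k)$, acting on $F$ (respectively on a real form $F_{\mathbb R}$ of signature $(1,k)$) and by a compact group on the $q$-orthogonal complement.

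\emph{Excluding the real case.} This is the step I expect to be the main obstacle. In the coordinates given by $\Psi$ in the proof of Proposition~\ref{pro:unip}, $\uu_t$ is the vertical Heisenberg translation $z_1\mapsto z_1+it$. Lifting to $E$, I would check that $n_t=\uu_t-\id$ has rank one and satisfies $n_t^2=0$. By contrast, a nontrivial unipotent of $\mathsf{SO}(1,k)$ acting on $F_{\mathbb R}\otimes\mathbb C$ satisfies $(u-\id)^2\neq 0$. Concretely, its logarithm is a nilpotent of the standard representation with $X^2\neq0$ and $X^3=0$, and it acts trivially on the complement. Hence $\mU\not\subset \mathsf{SO}(1,k)$, so $\mathsf S$ is conjugate to $\mathsf{SU}(1,m)$ and $N=\bP(F)\cap X$ is complex of dimension $m$. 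Here $m\geq 1$, since $\mathsf S$ is noncompact.

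\emph{Uniqueness, invariance under $\mG_2$, and the boundary point.} The representation of $\mathsf S$ on $E=F\oplus F^\perp$ is the standard one on $F$ and trivial (modulo compact) on $F^\perp$. Hence $F$ is the unique $\mathsf S$-invariant subspace of signature $(1,m)$, and $N$ is the unique complex totally geodesic $m$-dimensional subspace preserved by $\mathsf S$, a fortiori by $\mG_2$. The center $\mathsf Z$ and the other factors commute with $\mathsf S$, so they permute $\mathsf S$-invariant subspaces. By uniqueness they preserve $F$, hence $\mG_2$ preserves $N$.

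Finally, a nontrivial unipotent element of $\mG$ is parabolic and fixes exactly one point of $X\cup\partial_\infty X$. For $\uu_t$ this point is $0$, by the computation in Proposition~\ref{pro:unip}. Since $\uu_t$ preserves $N$, it acts on $\overline N$ as a parabolic of $\mathsf{SU}(1,m)$. That action has a fixed point in $\partial_\infty N$, which must be the unique fixed point $0$. Therefore $0\in\partial_\infty N$.
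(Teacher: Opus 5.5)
Your proof is correct. Its overall shape matches the paper's: reduce to the noncompact part, show it is of type $\mathsf{SO}$ or $\mathsf{SU}$, exclude the real case, then locate the boundary point. The decisive exclusion step, however, goes by a different route.

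\textbf{The paper's route.} It puts $\mU$ in a conjugate $\mH_0$ of $\mathsf{SU}(1,1)$. It then applies Jacobson--Morozov in the semisimple part of $\mG_2$ to get $\mH_1\supset\mU$ in the noncompact part, ``conjugate to $\mathsf{SU}(1,1)$''. It excludes $\mathsf{SO}(1,m)$ because the associated totally geodesic subspace is totally real, while that of $\mH_1$ is complex.

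\textbf{Your route.} You use a Jordan-type invariant instead: $(\uu_t-\id)^2=0$ on $E$, whereas nontrivial unipotents of $\mathsf{SO}(1,k)$ have type $[3,1,\ldots,1]$ on $F_{\mathbb R}\otimes\mathbb C$. This is precisely the fact the paper uses tacitly. Jacobson--Morozov alone only gives some $\mathfrak{sl}_2$ containing $\log\uu_t$. To know it is the ``complex'' $\mathsf{SU}(1,1)$ and not a totally real $\mathsf{SO}(1,2)$, one needs either Kostant's conjugacy of $\mathfrak{sl}_2$-triples with the same nilpositive element, or your observation that $\log\uu_t$ has rank one and square zero. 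That observation forces $E$ to split as a $2$-dimensional irreducible plus trivial summands.

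\textbf{What each approach buys.} Your argument also makes explicit the uniqueness of $N$ and its $\mG_2$-invariance, via $E=F\oplus F^\perp$ as standard plus trivial representation; the paper only asserts these. The price is invoking Karpelevich--Mostow and the classification of totally geodesic submanifolds of complex hyperbolic space. The paper relies on the same facts implicitly when it says a noncompact subgroup of $\mathsf{SU}(1,n)$ is conjugate to $\mathsf{SO}(1,m)$ or $\mathsf{SU}(1,m)$.

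\textbf{The boundary point.} Your unique-fixed-point argument for $0\in\partial_\infty N$ is equivalent to the paper's. The paper notes that $\uu_t(w)$, for $w\in N$, stays at bounded distance from $\uu_t(z)\to 0$ with $z\in L$.

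\textbf{A small fix.} If $\mG_2$ is disconnected, use elements normalizing $\mathsf S$ (the unique noncompact simple factor) rather than only those commuting with it. Such elements still send $F$ to an $\mathsf S$-invariant subspace of signature $(1,m)$, hence fix it.
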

\begin{proof}
The unipotent subgroup $\mU$ is included in a group $\mH_0$ which is conjugated to $\mathsf{SU}(1,1)$ in $\mG$. On the other hand, $\mU$ is included in the noncompact part $\mG_3$ of $\mG_2$. Applying   Jacobson--Morozov Theorem \cite[Proposition 1, Proposition 2, §11] {Bourbaki:owAvyv1m} to the semisimple part of $\mG_2$, implies that $\uu$ is contained in some subgroup $\mH_1$ of $\mG_3$ which is conjugated to $\mathsf{SU}(1,1)$. However since $\mG_3$ is non compact and included in $\mathsf{SU}(1,n)$, it is either conjugated to $\mathsf{SO}(1,m)$ or  $\mathsf{SU}(1,m)$ for some $m$. Groups conjugated to $\mathsf{SO}(1,m)$ cannot contain groups conjugated to  $\mathsf{SU}(1,1)$: indeed the totally geodesic subspace associated to $\mathsf{SO}(1,n)$ is totally real, while the totally geodesic subspace associated to $\mathsf{SU}(1,1)$ is complex. It follows that $\mG_3$ is conjugated to  $\mathsf{SU}(1,m)$  for some $m$. Thus there is a unique complex totally geodesic subspace $N$ preserved by $\mG_2$ and in particular by $\mU$. Let $z$ be in $L$ and $w$ in $N$. Observe that $d(\uu_t(z),\uu_t(w))$  is constant since $\uu_t$ is an isometry for the complex hyperbolic distance, this implies that 
$$
\lim_{t\to\infty}(\uu_t(w))=\lim_{t\to\infty}(\uu_t(z))=0\ .
$$
Hence $0$ belongs to $\partial_\infty N$ which is what we wanted to prove.
\end{proof}

\subsection{The projective space of $X$} For any complex manifold $M$ we consider
{\em projective space} of $M$ as the fiber bundle  over $M$ whose fiber at $x$ is the projective space of $\T_xM$: 
$$
\bP(M)\defeq\{(x,L)\mid L\in \bP(\T_xM)\}.
$$
When $M$ is a complex hyperbolic manifold $Y$, for every $(x,L)$ in $\bP(Y)$ there a unique $1$-dimensional complex totally geodesic hyperbolic space $N$ such that $x$ belong to $N$ and $L=\T_xN$. For any $1$-dimensional complex totally geodesic hyperbolic space $N$ immersed in $Y$, we denote by $h(N)$ the {\em lift of $N$} in $\bP(Y)$ as 
$$
h(N)=\{(y,\T_y N)\mid y\in N\}\ .
$$ 
Let  
$$\mG_0\defeq\mathsf{SU}(1,n)\ ,  \ \mG_1\defeq\mathsf{S}^1 \times\mathsf{SU}(n-1)\subset \mG_0\ , \ \mc G\defeq\Iso(\mG_0,\mG)\ ,$$ where ${\mathsf S}^1$ acts by 
multiplication on the second factor, $\mathsf{SU}(n-1)$ acts on the last coordinates. Then
$$
\bP(X)=\mc G/\mG_1\  .
$$
Since $\mG_1$ normalizes $\mathsf{SU}(1,1)$ --considered as acting on the first two coordinates, the action of $\mathsf{SU}(1,1)$ on  $\mc G$ gives a foliation on $\bP(X)$. The leaf of this foliation are precisely the immersed submanifold $h(N)$. Observe finally that $\mG$ acts on the left on $\cG$ and that for a torsion free discrete subgroup of $\mG$
$$
{\bP}(\Gamma\backslash X)=\Gamma\backslash\bP(X)\ .
$$
\section{Complex manifolds and holomorphic maps}
\subsection{Holomorphic maps}
Let $Z$ be a connected compact complex manifold of positive dimension $m$,  and boundary $\partial Z$. Let  $\phi$ be a holomorphic map from $Z$ with values in ${\bf P}(E)$.

\begin{definition}[\sc Hypothesis $(*)$]\label{sec:hypo*}
	We say $(Z,\phi)$  {\em satisfies hypothesis $(*)$} if  
\begin{enumerate}
	\item the set of  points $x$ in $Z$  where the tangent map  $\T_x\phi$ is injective is non empty (hence open and dense).	\item $\phi(Z)\cap X$ is non empty,
	\item $\phi(\partial Z)$ is a subset of ${\bf P}(E)\setminus X$.
\end{enumerate} 

\end{definition} 
Observe that  hypothesis (i) implies that   $m\leq n$ and that $\phi$ is not constant.  Then  (ii) implies that $Z\cap \partial_\infty X$ is not empty, since $E$ does not contain the non constant image  of any closed complex submanifold of positive dimension. 

Observe also that the hypotheses imply  that $\partial Z$ is not empty.

While we shall restrict ourselves to the case $Z$ is a curve in the sequel, for the moment $Z$ will have any dimension less than $n$.

\subsection{Regular points}

\begin{definition}[\sc Regular points]

We say a point $x$ in $Z$ is {\em regular with respect to $\partial_\infty X$}  if 
\begin{enumerate}
	\item $\phi(x)$ belongs to $\partial_\infty X$ ,
	\item $\T_x\phi$ is injective and $\im(\T_x\phi)$ is transverse to $\bX$ at $x$.
\end{enumerate}
We denote the set of regular points by $\Zreg$
\end{definition}

\subsection{The set of regular points}
We prove the following result
\begin{proposition}\label{pro:reg} The set $\Zreg$ of regular points is open and nonempty in $\phi^{-1}(\partial_\infty X)$.
\end{proposition}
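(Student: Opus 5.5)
Openness is the easy half. The conditions ``$\T_x\phi$ injective'' and ``$\im(\T_x\phi)+\T_{\phi(x)}\bX=\T_{\phi(x)}\bP(E)$'' are open conditions on $x$ near any point where they hold, because $\phi$ is holomorphic, hence $C^1$, and $\bX$ is a smooth real hypersurface. Intersecting with $\phi^{-1}(\bX)$ shows that $\Zreg$ is open in $\phi^{-1}(\bX)$.

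For non-emptiness I will work locally through a defining function. Let $\rho$ be the smooth function on a neighbourhood of $\bX$ in $\bP(E)$ given in an affine chart (say the unit ball model of Section \ref{sec:unitballmodel}) by $\rho(z)=|z|^2-1$. Then $X=\{\rho<0\}$, $\bX=\{\rho=0\}$, and $\d\rho\neq 0$ on $\bX$. A point $x$ with $\phi(x)\in\bX$ and $\T_x\phi$ injective is regular exactly when $\d(\rho\circ\phi)_x\neq 0$: transversality of a complex subspace to a real hypersurface means the subspace is not contained in the tangent hyperplane $\ker \d\rho$.

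Let $Z^0$ be the open dense set where $\T_x\phi$ is injective, given by hypothesis $(*)$. The main step is to find a point of $Z^0\cap\phi^{-1}(\bX)$ where $\d(\rho\circ\phi)\neq0$. By $(*)$(ii) the function $g=\rho\circ\phi$ takes negative values somewhere, and by (iii) it is nonnegative on $\partial Z$. In fact it is positive there, once one notes that $\phi(\partial Z)\cap \bX$ must be handled. Here I would first use that $\phi(\partial Z)$ avoids $X$, and then, if needed, shrink $Z$ slightly or argue near interior points.

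The open set $U=\{g<0\}\cap Z^0$ is nonempty and relatively compact in the interior of $Z$, and its boundary meets $\{g=0\}$. I then need a point of $\partial U$ inside $Z^0$ where $\d g\neq0$. Suppose instead that $\d g=0$ on all of $\{g=0\}\cap Z^0$ near $\partial U$. Since the complement of $Z^0$ is a proper analytic subset, hence nowhere dense and not disconnecting, I can choose a real-analytic path in $Z^0$ from a point where $g<0$ to a point where $g>0$, or to $\partial Z$. Along the path $g$ changes sign. By Sard's theorem applied to $g$ restricted to $Z^0$, there are regular values $-\epsilon$ arbitrarily close to $0$, and the critical values accumulate at $0$ only in a controlled way. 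Here is the cleaner argument I would actually use. The function $g$ is real-analytic on $Z^0$, because $\rho$ is real-analytic. Its zero set in $Z^0$ separates $\{g<0\}$ from $\{g>0\}$, so it has real codimension one somewhere. On a top-dimensional stratum where $g$ changes sign, take the germ $g=h^k u$ with $h$ a local real-analytic defining function and $u\neq0$, with $k$ odd.

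The case $k>1$ is the main obstacle. I would rule it out using the strong pseudoconvexity of $\bX$. Since $\rho$ is strictly plurisubharmonic and $\phi$ is an immersion on $Z^0$, the function $g$ is strictly plurisubharmonic there. Along a complex line in $Z^0$ transverse to the zero stratum, $g$ restricts to a subharmonic function with positive Laplacian. But $h^k u$ with $k\ge 3$ odd has vanishing Laplacian on the zero set, a contradiction. Hence $k=1$ and $\d g\neq0$ at generic points of that stratum, which gives a regular point.
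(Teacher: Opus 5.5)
\textbf{Gap: the case $\phi(Z)\subset\bar X$.} The step that fails is your claim that $g=\rho\circ\phi$ is positive on $\partial Z$. Your separation argument depends on it, because it needs a point of $Z^0$ in the interior of $Z$ where $g>0$. Hypothesis $(*)$(iii) only gives $g\ge 0$ on $\partial Z$, and this matters. Take $Z$ the closed unit disc and $\phi$ the inclusion onto the closure of $X\cap L$, for a complex line $L$ meeting $X$. Then $(*)$ holds, but $g\le 0$ on $Z$, $\{g>0\}$ is empty, and $\phi^{-1}(\bX)=\partial Z$. So every regular point lies on $\partial Z$, where your interior stratification argument sees nothing.

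Neither proposed remedy helps. Shrinking $Z$ can never create points where $g>0$. There are also no interior points of $\phi^{-1}(\bX)$ to argue near. The case $\phi(Z)\subset\bar X$ needs its own argument, for instance:
\begin{enumerate}
\item The maximum principle for the plurisubharmonic $g$ on the connected interior of $Z$ forces $g<0$ there. Hence $\phi^{-1}(\bX)=\partial Z$.
\item $\partial Z\cap Z^0\neq\emptyset$, because the non-immersion locus is complex analytic of real codimension at least $2$ and cannot contain the real hypersurface $\partial Z$.
\item Let $x_0\in\partial Z\cap Z^0$ and suppose $\d g_{x_0}=0$. By strict plurisubharmonicity, the real Hessian of $g$ in local holomorphic coordinates has positive trace. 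So there is a direction $v$ with positive Hessian value, and then $-v$ also has positive value. Since positivity is open, some such direction points strictly into $Z$, giving $g>0$ at nearby points of $Z$. This contradicts $g\le 0$; it is Hopf's lemma in disguise.
\end{enumerate}
Hence $x_0$ is regular.

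\textbf{Comparison with the paper's proof.} Outside this degenerate case your argument is correct, and it takes a genuinely different route. The paper works globally. Stokes' formula on the sublevel sets $\{\Vert\phi\Vert\le r_p\}$, combined with Hardt's uniform bound on the $(2m-1)$-volume of the level sets $K_{r_p}$, produces a point of $K_1$ where $\phi^*(\beta\wedge\omega^{m-1})\neq0$. That point is then shown to be regular. Since $K_1$ contains whatever part of $\partial Z$ lands in $\bX$, the paper needs no case distinction.

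You work locally, and the steps hold as follows:
\begin{enumerate}
\item Take $\rho=\vert z\vert^2-1$ on the whole affine chart. Then $g$ is real-analytic and strictly plurisubharmonic on the complement of a proper analytic subset, which is connected.
\item Once $g$ takes both signs there, strata of $\{g=0\}$ of codimension at least $2$ cannot separate. So some codimension-one stratum carries a sign change.
\item On that stratum the normal form $g=h^ku$ holds with $k$ odd.
\item Strict plurisubharmonicity excludes $k\ge 3$, since all second derivatives of $h^ku$ vanish on $\{h=0\}$.
\end{enumerate}
This trades Hardt's volume estimate for the stratification theory of real-analytic sets. It also gives extra information: generic points of any sign-changing hypersurface part of $\phi^{-1}(\bX)$ are regular.
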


Let $Z^0$ the set of points in $Z$ where $\T\phi$ is injective. Observe that $Z^0$ is open. The set of points $\Zreg^0$ of $ \phi^{-1}(\bX)$ transverse to $\bX$ is obviously open in $Z^0\cap \phi^{-1}(\bX)$. Thus $\Zreg$ is open  in $\phi^{-1}(\bX)$.  The following lemma implies the result.

\begin{lemma}\label{lem:non-empty1}
The set $\Zreg^0$ is non empty.
\end{lemma}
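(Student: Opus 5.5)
We need to show there is a point $x\in Z$ with $\T_x\phi$ injective, $\phi(x)\in\bX$, and $\im(\T_x\phi)$ transverse to $\bX$ at $\phi(x)$. The plan is to use the defining function $\rho([v])=q(v)/\Vert v\Vert^2$ for some auxiliary positive hermitian norm on $E$. Then $X=\{\rho>0\}$ and $\bX=\{\rho=0\}$, and $\d\rho\neq 0$ along $\bX$. Set $g=\rho\circ\phi$, a real smooth function on $Z$.

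Since $\T_{\phi(x)}\bX$ is a real hyperplane containing the maximal complex subspace $\phi(x)^\perp$, a complex subspace $\im(\T_x\phi)$ fails to be transverse to $\bX$ exactly when it lies in $\T\bX$. Thus, at a point $x\in Z^0$ with $\phi(x)\in\bX$, transversality is equivalent to $\d_x g\neq 0$. So it suffices to find a point $x\in Z^0\cap\phi^{-1}(\bX)$ with $\d_xg\neq0$.

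The key steps are as follows.

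\emph{Step 1.} By hypothesis (ii), $g$ is positive somewhere. By hypothesis (iii), $g\le 0$ on $\partial Z$. Hence the open set $\Omega=\{g>0\}$ is nonempty and relatively compact in the interior of $Z$, and its topological boundary $\partial\Omega$ is contained in $\phi^{-1}(\bX)$.

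\emph{Step 2.} Since $Z^0$ is open and dense and $Z$ is connected, the set $Z^0\cap\Omega$ is nonempty. Moreover, $\{g\ge 0\}\neq Z$, because $\partial Z\neq\emptyset$; if $g$ vanishes on $\partial Z$ one can perturb slightly inward, or argue with $\{g>-\epsilon\}$ instead.

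\emph{Step 3.} Show that the critical zeros of $g$ cannot fill $\partial\Omega\cap Z^0$. Locally near a point $x_0\in\partial\Omega$, the function $\log\bigl(-q(\tilde\phi)\bigr)$, for a local holomorphic lift $\tilde\phi$, is plurisubharmonic where it is defined, since $-q$ restricted to a holomorphic curve has subharmonic logarithm of its negative part only on the right side. Rather than pursue this, I will use real analyticity: $g$ is real analytic on the interior of $Z$ and not identically zero, because it is positive somewhere. Hence $\{g=0\}$ is a real analytic set of real codimension at least $1$. Its part of top dimension $2m-1$ must be nonempty, since it separates $\Omega$ from its complement near $\partial\Omega$. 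At a generic smooth point $y$ of this part, $g$ equals $h^k u$, where $h$ is a local defining function of the hypersurface and $u\neq0$.

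\emph{Step 4.} Rule out $k\ge2$. If $k\ge2$, then $\{g=0\}$ is a hypersurface near $y$ on which $\d g=0$. Pull back to a complex curve through $y$, taken as the image of a disc $D$ in the interior of $Z$. There $\tilde\phi\colon D\to E$ is holomorphic with $q(\tilde\phi)$ vanishing to order at least $2$ along a real curve. I want to show this forces $q(\tilde\phi)\equiv0$ near $y$ on $D$. Write $q(\tilde\phi(z))$ in terms of the real analytic function $F(z,\bar w)=\langle\tilde\phi(z),\tilde\phi(w)\rangle$, holomorphic in $z$ and antiholomorphic in $w$. Vanishing of $F(z,\bar z)$ together with its normal derivative along a real curve should force, by polarization and the identity theorem, $F(z,\bar z)\equiv0$ near $y$. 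This contradicts $y\in\partial\Omega$ unless $\phi$ maps the region into $\bX$. That case is excluded because $\bX$ contains no germs of complex curves: it is strictly pseudoconvex (Levi form nondegenerate on $x^\perp$). This contradiction gives a point $y$ with $\d_yg\ne0$, and $y$ can be chosen in the dense open set $Z^0$, because the top-dimensional smooth part of $\{g=0\}$ meets $Z^0$ (its complement $Z\setminus Z^0$ is a complex analytic subset of real codimension at least $2$).

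The main obstacle is Step 4: excluding a degenerate tangency, in which $g$ changes sign across $\{g=0\}$ only with even-order vanishing or vanishes to higher order along a hypersurface. If the polarization argument fails, I would instead use strict pseudoconvexity directly. Near $y$, the function $\rho$ can be replaced by $-\log(-q/\Vert\cdot\Vert^2)$-type plurisubharmonic exhaustions, and a Hopf-lemma argument on the holomorphic disc gives $\d g\neq0$ at the boundary of $\Omega$ at a point where $\Omega$ satisfies an interior ball condition. Such points exist at the farthest point of $\partial\Omega$ from a fixed interior point in a local coordinate chart.
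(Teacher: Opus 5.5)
Your main route has a genuine gap in Step 4. The claim that vanishing of $q(\tilde\phi)$ together with its normal derivative along a real curve forces $q(\tilde\phi)\equiv 0$ is false. Take $q(v)=|v_0|^2-|v_1|^2-|v_2|^2$, so that $X$ is the unit ball in $\mathbb C^2$, and the immersed holomorphic curve $\tilde\phi(z)=(1,\cos z,\sin z)$. Then $F(z,\bar w)=1-\cos(z-\bar w)$ and
\[
q(\tilde\phi(z))=1-\cosh(2\operatorname{Im} z)=-2\sinh^2(\operatorname{Im} z).
\]
This vanishes to order exactly two along the real axis without vanishing identically. Geometrically, the curve is the complexified circle $z_1^2+z_2^2=1$, which is tangent to $\bX$ along the real circle. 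The polarization argument cannot work: the identity theorem only propagates the vanishing from the real axis along $\{w=\bar z\}$, where $1-\cos(z-\bar w)$ does vanish to second order. But $q(\tilde\phi(z))=F(z,\bar z)$ is the restriction to $\{w=z\}$, which meets that set only along the real axis. What excludes $k\ge 2$ at points of $\partial\Omega$ is the \emph{sign} of $g$, namely $g>0$ on one side (in the example $g\le 0$ on both sides). That is a maximum-principle fact, invisible to the identity theorem.

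Your Hopf-lemma fallback is the right repair, and it makes the stratification of Step 3 unnecessary. However, the point you choose is wrong: the farthest point of $\partial\Omega$ from $p\in\Omega$ gives a ball \emph{containing} $\Omega$, not one inside it. Instead, take $p\in\Omega$ close to $\partial\Omega$ in a chart, and let $x_0$ be a point of $Z\setminus\Omega$ nearest to $p$. Set $u=\Vert\phi\Vert^2$ in the unit ball chart; it is plurisubharmonic, hence subharmonic in holomorphic coordinates. Then $u<1$ on $B(p,|p-x_0|)$ and $u(x_0)=1$, so Hopf gives $\d u_{x_0}\neq 0$. Hence $\{u=1\}$ is a smooth real hypersurface near $x_0$ along which $\d u\neq0$. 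It cannot lie in the proper analytic set $Z\setminus Z^0$, and this yields a point of $\Zreg^0$.

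Separately, Step 1 does not follow from (iii), which allows $\phi(\partial Z)$ to meet $\bX$. For $\phi$ the inclusion of the closed unit ball, $\phi^{-1}(\bX)=\partial Z$, so $x_0$ must be allowed on $\partial Z$; the nearest-point argument still works in a boundary chart, whereas perturbing inward does not help.

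Once repaired, your argument is local and avoids Sard's theorem and Hardt's uniform bound on $\mu(K_{r_p})$. The paper instead uses the Stokes identity $\int_{U_{r_p}}\phi^*\omega^m=\int_{K_{r_p}}\phi^*(\beta\wedge\omega^{m-1})$ to find a point where $\phi^*(\beta\wedge\omega^{m-1})\neq0$, which gives transversality and injectivity at once.
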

We now consider, as per  section \ref{sec:unitballmodel}, the unit ball model of the complex hyperbolic space in $\mathbb C^n$ the model. Let $r$ with $r\leq 1$ and 
$$K_r\defeq\{x\in Z\ ,\  \Vert \phi(z)\Vert=r\}\ ,\ U_r\defeq \{x\in Z\ ,\  \Vert \phi(z)\Vert \le r\}\ .$$
By construction $K_1=\phi^{-1}(\bX)$

By Sard Theorem, there exists a sequence $\seq{r}$ converging to 1 so that for all $p$,
$U_{r_p}$ is a 	compact real submanifold  with boundary contained in $Z$   satisfying 
$$
\partial U_{r_p}=K_{r_p}\ .
$$
Furthermore, using the same notation as above, 
\begin{lemma}\label{lem:2n-3}
There exists a constant $A$, so that 
$\mu(K_{r_p})\leq A , $	
where  $\mu$ is the $2m-1$ dimensional measure.
\end{lemma}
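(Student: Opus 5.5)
The plan is to bound the $(2m-1)$-dimensional volume of the level sets $K_{r_p}=\{z\in Z : \Vert\phi(z)\Vert=r_p\}$ uniformly in $p$ by a real-analytic, Crofton-type argument. The function $\rho\defeq\Vert\phi\Vert^2$ is real analytic on $Z$. The key point is that $K_{r_p}$ stays in a fixed compact region away from $\partial Z$. By hypothesis $(*)$(iii), $\phi(\partial Z)$ is disjoint from $\bar X$; I read this, as in the Theorem of the introduction, as giving a gap. Indeed, $\partial Z$ is compact, so $\rho\geq 1+\delta$ on $\partial Z$ for some $\delta>0$. Hence, for $r_p$ close to $1$, every $K_{r_p}$ lies in the compact set $Z_\delta\defeq\{\rho\leq 1+\delta/2\}$, which sits in the interior of $Z$. (I assume $\phi(Z)$ stays in the affine chart near $\bar X$; otherwise I shrink $Z$ to a neighbourhood of $\phi^{-1}(\bar X)$.)

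I would then bound the volume of the level sets $\{\rho=s\}$, for $s$ regular, uniformly in $s$ near $1$. First I cover $Z_\delta$ by finitely many coordinate charts $V_1,\dots,V_k$, each identified real-analytically with a cube in $\mathbb R^{2m}$, with the metrics comparable to the Euclidean one. In each chart I apply the Cauchy--Crofton formula. The $(2m-1)$-volume of a hypersurface $\{\rho=s\}\cap V_i$ is controlled by the integral, over the space of affine lines $\ell$ parallel to coordinate directions (or over all lines meeting the cube), of the number of points of $\ell\cap\{\rho=s\}$. It therefore suffices to bound $\#(\ell\cap\{\rho=s\}\cap V_i)$ uniformly in $\ell$ and in $s\in[1-\epsilon,1+\epsilon]$, off a measure-zero set of lines.

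This uniform bound is the main obstacle. I would handle it by uniform finiteness for real-analytic families. Consider the map $(\ell,t,s)\mapsto\rho(\ell(t))-s$ on a compact parameter set; this is a real-analytic family of functions of one variable $t$. By Gabrielov's theorem, or equivalently by o-minimality of $\mathbb R_{an}$ applied to the subanalytic set $\{(\ell,t,s):\rho(\ell(t))=s\}$ over the compact parameter set, the number of isolated zeros is uniformly bounded by some $N$. Lines on which $\rho(\ell(\cdot))\equiv s$ on an interval form a set of measure zero in the line space. This is because for regular $s$ the hypersurface is smooth of codimension $1$, so Crofton only counts transverse intersections, which are isolated.

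Summing over the charts then gives $\mu(K_{r_p})\leq A\defeq C\,N\,k$, where $C$ depends only on the charts and on the comparison between the metrics. As a fallback if I want to avoid o-minimality, I would use the coarea formula on an annulus $\{1-\epsilon\leq\rho\leq 1+\epsilon\}$. That only gives the bound on average over $s$, but it would still let me choose the sequence $r_p$, since Sard's theorem and Fubini together leave a full-measure set of admissible $r$. That weaker statement suffices for how the lemma is used, since the sequence $\seq{r}$ is ours to choose.
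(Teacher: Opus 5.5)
Your argument is correct and is essentially the proof behind the paper's one-line citation of Hardt: the paper invokes his uniform bound on the Hausdorff measure of fibres of a bounded subanalytic family, which is proved as you do it, from the Cauchy--Crofton formula combined with uniform finiteness of the zero-dimensional slices. One correction: drop the coarea fallback, because an $L^1$ bound on $s\mapsto\mu(K_s)$ near $s=1$ does not give a sequence $r_p\to 1$ with $\mu(K_{r_p})$ bounded (a profile like $\mu(K_s)\sim|1-s|^{-1/2}$ is integrable yet blows up at $s=1$), so that weaker statement would not suffice for the Stokes argument in the next lemma.
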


\begin{proof} This is a statement contained in Paragraph 3 in \cite{Hardt:1983aa}  
\end{proof}

 Let us consider the  forms on $\mathbb C^n$ defined at a point $z$ in $\mathbb C^n$ by 
$$
\omega(u,v)=\Re(\braket{u,iv})\ \ , \ \ \beta_z(u)=\Re(\braket{z,iu})\ .
$$
In the coordinates $(z_1,\ldots,z_n)$ of $\mathbb C^n$, writing $z_j=x_j+iy_j$, with $x_j$ and $y_j$ real, we have 
$$
\omega=\sum_{j=1}^{n}\d x_j\wedge \d y_j\ \  ,\  \ \beta=\frac{1}{2}\left(\sum_{j=1}^{n} x_j \d y_j- y_j\d x_j\right)\ ,
$$
and observe that $\d\beta=\omega$.

Our first lemma is
\begin{lemma}
	There is a point $z$ in $K_1$ at which $\phi^*(\beta\wedge
	\omega^{m-1})\not=0$.
\end{lemma}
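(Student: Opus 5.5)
We argue by contradiction, via Stokes' theorem on the regions $U_{r_p}$ and a limiting argument as $r_p\to 1$. Suppose that $\phi^*(\beta\wedge\omega^{m-1})$ vanishes identically on $K_1$, as a form on $Z$ at every point of $K_1$.

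The first step is to compute $\int_{U_{r_p}}\phi^*\omega^m$. Since $\d\beta=\omega$ and $\d\omega=0$, we have $\d(\beta\wedge\omega^{m-1})=\omega^m$. The set $U_{r_p}$ is a compact manifold with boundary $K_{r_p}$, so Stokes gives
$$
\int_{U_{r_p}}\phi^*\omega^m=\int_{K_{r_p}}\phi^*(\beta\wedge\omega^{m-1})\ .
$$
Because $\phi$ is holomorphic, $\phi^*\omega^m$ is a nonnegative multiple of the volume form of $Z$. By hypothesis (i) of $(*)$ it is strictly positive on the open dense set $Z^0$. By hypothesis (ii), $U_{r}$ has nonempty interior for $r<1$ close to $1$, since $\phi(Z)$ meets the open ball. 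Hence the left-hand side is bounded below by a constant $c>0$ for all $p$ large; it is moreover nondecreasing in $p$.

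The second step is to show that the right-hand side tends to $0$. We use Lemma \ref{lem:2n-3}: the $(2m-1)$-measures $\mu(K_{r_p})$ are bounded by $A$. The form $\phi^*(\beta\wedge\omega^{m-1})$ is continuous on the compact manifold $Z$ and vanishes on $K_1$. So for every $\epsilon>0$ there is a neighbourhood $V$ of $K_1$ on which its pointwise norm, measured against a fixed Riemannian metric on $Z$, is less than $\epsilon$. Since $\Vert\phi\Vert$ is continuous and $Z$ is compact, $K_{r_p}\subset V$ for $p$ large, and therefore
$$
\left|\int_{K_{r_p}}\phi^*(\beta\wedge\omega^{m-1})\right|\leq \epsilon A\ .
$$
Letting $\epsilon\to 0$ contradicts the lower bound $c>0$. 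This proves that some point $z$ of $K_1$ has $\phi^*(\beta\wedge\omega^{m-1})\neq 0$.

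The main obstacle is making the second step rigorous. The measure $\mu$ in Lemma \ref{lem:2n-3} must be a Hausdorff measure with respect to a metric on $Z$ that is compatible with the pointwise norm used for the form. In addition, $K_{r_p}$ must stay away from $\partial Z$ so that Stokes applies without boundary terms. The second point follows from hypothesis (iii) of $(*)$: $\phi(\partial Z)$ lies outside the closed ball $\bar X$, so $\Vert\phi\Vert>1$ on $\partial Z$ and hence $U_{r}$ lies in the interior of $Z$ for $r\le 1$. I would fix a Riemannian metric on $Z$ and read Hardt's estimate in that metric. If Hardt's estimate is only stated with respect to the image measure in $\mathbb C^n$, I would pull back the estimate through $\phi$ instead. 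Along $K_{r_p}$ one can then bound $|\phi^*(\beta\wedge\omega^{m-1})|$ by the sup of $\beta\wedge\omega^{m-1}$ restricted to the image tangent planes, which is still small near $K_1$ by continuity.
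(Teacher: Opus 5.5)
Your proposal is correct and is essentially the paper's own proof: argue by contradiction, apply Stokes on $U_{r_p}$ using $\d(\beta\wedge\omega^{m-1})=\omega^m$, observe that the left-hand side is positive and nondecreasing, and force the right-hand side to zero using the uniform bound $\mu(K_{r_p})\leq A$ together with the uniform smallness of the form on $K_{r_p}$ near $K_1$. Your extra care (one Riemannian metric on $Z$ for both $\mu$ and the pointwise norm, and $U_{r_p}$ avoiding $\partial Z$) makes explicit what the paper leaves implicit; note that hypothesis (iii) of $(*)$ only gives $\Vert\phi\Vert\geq 1$ on $\partial Z$, not $>1$, but this suffices because $r_p<1$.
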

\begin{proof} We will work by contradiction and assume that $\phi^*(\beta\wedge\omega^{m-1})=0$ on $K_1$. Then $$
\lim_{p\to\infty}\left(\sup_{z\in K_{r_p}} \Vert \phi^*(\beta\wedge\omega^{m-1})_z\Vert\right)\  =0\ .
$$
Since  
$$
\left\vert\int_{K_{r_p}}\phi^*(\beta\wedge\omega^{m-1})\right\vert \leq \mu(K_{r_p})\ \left(\sup_{z\in K_{r_p}} \Vert \phi^*(\beta\wedge\omega^{m-1})_z\Vert\right)\ ,
$$
we get from lemma \ref{lem:2n-3} that 

$$
\lim_{p\to\infty}\left\vert\int_{K_{r_p}}\phi^*(\beta\wedge\omega^{m-1})\right\vert =0\ .
$$
However, since $U_{r_p}$ is a smooth manifold with boundary $K_{r_p}$ and $\d\beta=\omega$,  Stokes formula gives
$$
\int_{U_{r_p}}\phi^*(\omega^{m})= \int_{K_{r_p}}\phi^*(\beta\wedge\omega^{m-1})\ .
$$
Now observe that the lefthand side is a positive increasing function of $p$, since $\phi$ is an immersion on some open dense set in $Z$ by hypothesis $(*)$. This gives the contradiction.
\end{proof}

Then we get 

\begin{proof}[Proof of Lemma  \ref{lem:non-empty1}] Let $z$ obtained by previous lemma, that is such that 
$$
\phi^*(\beta\wedge\omega^{m-1})_z\not=0\ .
$$
It follows that 
\begin{equation}
\phi^*(\beta)_z\not=0\hbox{ and } 	\phi^*(\omega^{m-1})_z\not=0 \label{eq:n-2}\ .
\end{equation}
From the first assertion, we get that 
$$\im(\T_z\phi)\not\subset \ker(\beta_{\phi(z)})\ .$$
But
$$
\ker(\beta_{\phi(z)})=\{u\mid \Re\left( \braket{u,i \phi(z)}\right)=0\}=i\T_{\phi(z)}\partial_\infty X\ .$$
Thus 
$$\im(\T_z \phi)=i \im(\T_z \phi)\not\subset \T_{f(z)} \ \partial_\infty X \ .$$
Thus $\phi$ is transverse to $\partial_\infty X$ at $z$. Hence  $M\defeq \phi^{-1}(\partial_\infty X)$ is a real hypersurface  of $Z$ near $z$. 

Let $V\defeq \T_z M\cap i \T_z M$. Then $V$ is a complex hyperspace of $\T_z Z$ and we may write the orthogonal decomposition
$$
\T_z M=V\oplus \mathbb R u\ ,
$$
for some $u$. Observe that $\iota_u\omega\big\vert_{\T_zM}=0$ and  since $\phi^*(\omega^{{m-1}})_z$ is non zero, it follows from the second part of assertion \eqref{eq:n-2} that \begin{equation}
	\phi^*(\omega^{m-1})\big\vert_V\not=0\ .\label{eq:n-2(1)}
\end{equation}
Observe now that  $iu$ is not in $T_z M$ and thus $\T_zf (iu)$ is non zero. It follows that 
$$
\phi^*\omega(u,iu)\not=0\ .
$$
Hence combining with assertion \eqref{eq:n-2(1)}
$$
\phi^*(\omega^{m})\not=0\ .
$$
Thus $\phi$ is an immersion at $z$. It follows that $z$ is a regular point.
\end{proof}
\subsection{Regular points and totally geodesic submanifolds}

Let $\mc N$ be the set of strict complex projective subspaces $N$ in of $X$ (meaning, we exclude $X$ itself), such that the projection of $N\cap X$ on $Y=\Gamma\backslash X$ is closed and nonempty.

\begin{proposition}
	The set $\mc N$ is countable.
\end{proposition}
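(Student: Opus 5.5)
The plan is to reduce countability of $\mc N$ to countability of a family of subgroups of $\Gamma$, using that $\Gamma$ is countable (it is a discrete subgroup of a second countable Lie group). For $N$ in $\mc N$, write $N=\bP(F)\cap X$ with $q\vert_F$ of signature $(1,k)$, where $0\leq k<n$. Let $\Gamma_N$ be the stabiliser of $N$ in $\Gamma$. I will show that $N$ is uniquely determined by $\Gamma_N$. The map $N\mapsto \Gamma_N$ is then an injection of $\mc N$ into the set of subgroups of $\Gamma$ that are finitely generated. The latter set is countable, since every finitely generated subgroup is given by a finite subset of the countable set $\Gamma$. This proves the proposition.

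The first step is to show that $\Gamma_N$ acts cocompactly on $N$. Since $\Gamma$ is uniform, $Y=\Gamma\backslash X$ is compact. The projection $\pi(N)$ is closed, hence compact. I will argue that $\pi(N)$ is an immersed totally geodesic submanifold, which requires showing that the immersion $\Gamma_N\backslash N\to Y$ is proper. Closedness alone gives only that the image is compact. To get properness, consider the translates $\gamma N$ meeting a fixed compact fundamental domain $D$. I expect that, by discreteness of $\Gamma$ together with closedness of $\pi(N)$, only finitely many translates meet $D$. A cleaner route is the following alternative: a closed $\Gamma$-invariant union of the translates $\gamma N$ is locally a union of finitely many submanifolds, because otherwise an accumulation of distinct translates $\gamma_p N$ near a point of $D$ would, by closedness, produce an open set of $X$ lying in the closure, contradicting properness of the strict inclusion. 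This local finiteness is the main obstacle: the definition of $\mc N$ only says ``closed'', so I must rule out closed sets that are unions of infinitely many accumulating translates. I would first try the Baire-category argument on the countable closed union $\bigcup_{\gamma}\pi(\gamma N)$. Each piece has empty interior since $k<n$. Hence the union is nowhere dense, and it is a closed set. Near a limit point, I would use the transitivity statement of Lemma \ref{lem:trans} to normalise the translates and show that accumulation forces a limiting $N'$ containing a neighbourhood. Once local finiteness holds, the stabiliser $\Gamma_N$ acts properly and cocompactly on $N$. In particular $\Gamma_N$ is finitely generated, by the Švarc--Milnor lemma.

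The second step is to recover $N$ from $\Gamma_N$. Since $\Gamma_N$ acts cocompactly on $N$, which is isometric to complex hyperbolic space of dimension $k$, its limit set in $\partial_\infty X$ is all of $\partial_\infty N=\bP(F)\cap\partial_\infty X$. The projective span of this set is $\bP(F)$, because the null cone of a form of signature $(1,k)$ spans $F$. The degenerate case $k=0$, where $N$ is a point, is excluded because $\pi(N)$ is then a point whose stabiliser is trivial in the torsion-free $\Gamma$. That case is handled separately: such $N$ range over the points of $X$, which are uncountably many. So either the definition implicitly requires positive dimension, or I read projective subspaces as those of dimension at least one, and I will state this assumption. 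Hence $N$ is the intersection with $X$ of the projective span of the limit set of $\Gamma_N$, which completes the injection argument.
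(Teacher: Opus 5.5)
Your overall strategy is essentially the paper's one-line argument: send $N$ to its stabiliser $\Gamma_N$, show it is finitely generated, and recover $N$ from it. Several of your details are correct, and in places more careful than the paper.
\begin{itemize}
\item Mapping into subgroups rather than conjugacy classes gives an honest injection.
\item The limit-set argument correctly recovers $\bP(F)$ when $k\geq 1$.
\item You are right that $0$-dimensional $N$ must be excluded; otherwise every point of $X$ lies in $\mc N$ and the statement fails.
\end{itemize}

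The gap is the step you yourself flag: that closedness of $\pi(N)$ allows only finitely many translates $\gamma N$ to meet a compact set $D$, so that $\Gamma_N\backslash N$ is compact. Neither sketch proves this, and the mechanism you propose is wrong. A sequence of distinct translates $\gamma_pN$ meeting $D$ subconverges, in the space $\mc M$ of $k$-dimensional complex totally geodesic subspaces, to another $k$-dimensional subspace $N'$. Nothing in this produces an open subset of $X$. Nowhere density of $\bigcup_\gamma \gamma N$ is perfectly compatible with accumulating planes, so there is no contradiction to be found in $X$. Also, $\bigcup_\gamma\pi(\gamma N)$ is just $\pi(N)$, and Lemma~\ref{lem:trans} plays no role here.

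The missing idea is to use countability of $\Gamma$ together with Baire's theorem in $\mc M$ rather than in $X$.
\begin{itemize}
\item Since $\bigcup_\gamma\gamma N=\pi^{-1}(\pi(N))$ is closed, the limit satisfies $N'\subset\bigcup_\gamma\gamma N$.
\item By Baire on $N'$, some intersection $N'\cap\gamma N$ has interior in $N'$. That intersection is itself a projective subspace, so $N'=\gamma N$. Hence the orbit $\Gamma\cdot N$ is closed in $\mc M$.
\item A nonempty countable closed subset of a locally compact space has an isolated point, again by Baire. By $\Gamma$-equivariance every point of $\Gamma\cdot N$ is then isolated.
\end{itemize}
So $\Gamma\cdot N$ is closed and discrete in $\mc M$. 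Since the subspaces meeting $D$ form a compact subset of $\mc M$, only finitely many translates meet $D$, and the rest of your argument goes through.

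This is the standard fact that if $\Gamma H$ is closed in $\mG$, then $(\Gamma\cap H)\backslash H\to\Gamma\backslash\mG$ is a homeomorphism onto its image. The paper's proof uses this implicitly without stating it.
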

\begin{proof} Indeed, since $\mathsf{SU}(1,d)$ has rank 1, the set $\mc N$ injects in the set of conjugacy classes of finitely generated  subgroups of $\Gamma$, which is a countable set.
\end{proof}

Let $(Z,\phi)$ satisfying hypothesis $(*)$ of definition \ref{sec:hypo*}, then we have
\begin{proposition}\label{pro:regnotin}
	Assume $\phi(Z)$ is not included in a complex projective space $M$ whose projection on $\Gamma\backslash X$ is closed.
	
	Then there exists a regular element $z$ such that $\phi(z)$ does not belong to any  $N$ in $\mathcal N$.
\end{proposition}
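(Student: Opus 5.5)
The plan is a Baire category argument on the regular set. By Proposition \ref{pro:reg}, $\Zreg$ is a nonempty open subset of $\phi^{-1}(\bX)$. Near any regular point $z_0$, $\phi$ is an immersion transverse to $\bX$. Hence $M_0\defeq \phi^{-1}(\bX)$ is, near $z_0$, a smooth real hypersurface of $Z$ of real dimension $2m-1$. Shrinking, I fix a small open piece $W\subset \Zreg$ of this hypersurface. It is a manifold, so it is a Baire space.

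By the preceding proposition, $\mc N$ is countable. The proposal is to write
$$
W=\bigcup_{N\in\mc N}\bigl(W\cap\phi^{-1}(\overline N)\bigr)
$$
under the assumption that the conclusion fails. Here $\overline N$ is the closure of $N$ in $\bP(E)$; it is a projective subspace, and its trace on $\bX$ is $\partial_\infty N$. Each set $W\cap\phi^{-1}(\overline N)$ is closed in $W$. Baire's theorem then gives a single $N\in\mc N$ such that $W\cap\phi^{-1}(\overline N)$ has nonempty interior in $W$. So there is an open piece $W'$ of the real hypersurface $M_0$ with $\phi(W')\subset \overline N$.

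The key step is to upgrade this to $\phi(Z)\subset\overline N$. Write $\overline N=\bP(F)$. Locally the condition $\phi\in\bP(F)$ is the vanishing of finitely many holomorphic functions: in an affine chart, the linear forms defining $F$ composed with a local lift of $\phi$. So $\phi^{-1}(\overline N)$ is a complex analytic subset of $Z$. A complex analytic subset containing a nonempty open piece of a smooth real hypersurface must contain a neighbourhood of that piece. Indeed, a real hypersurface is not contained in any proper analytic subset, since proper analytic subsets have real codimension at least $2$ and a real $(2m-1)$-dimensional submanifold cannot lie in such a set. Concretely, a holomorphic function vanishing on a real hypersurface vanishes identically near it, because the hypersurface is a uniqueness set: it contains, through each point, a totally real direction together with complex directions. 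Since $Z$ is connected and irreducible, the identity theorem gives $\phi(Z)\subset \overline N$, i.e.\ $\phi(Z)$ lies in a complex projective space whose projection on $\Gamma\backslash X$ is closed. This contradicts the hypothesis, and the regular point is found.

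The main obstacle is to make sure the statement ``not in any $N$'' is correctly interpreted for points of $\bX$, where $N\cap X$ does not contain $\phi(z)$. I will interpret $N$ as the full projective subspace, or equivalently use $\partial_\infty N$, and check that the closedness in $W$ of the preimages is fine. The analytic uniqueness step across a real hypersurface is the one I would verify most carefully. The plan is to reduce it to the identity principle for the holomorphic functions $\ell\circ\tilde\phi$, where $\ell$ ranges over linear forms vanishing on $F$ and $\tilde\phi$ is a local lift of $\phi$ to $E$.
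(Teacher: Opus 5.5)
Your proposal is correct and follows essentially the same route as the paper: a Baire category argument over the countable family $\mc N$ on a small piece of the real hypersurface $\phi^{-1}(\bX)$ made of regular points. In both, the identity theorem rules out that $\phi^{-1}(N)$ contains an open piece of that hypersurface; your justification of this step by real dimension count (a proper analytic subset has real dimension at most $2m-2$) is more explicit than the paper's.
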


\begin{proof} Let $z$ be a regular point. We can therefore assume that $\phi$ is a an embedding on the neighbourhood of $z$ and use freely the identification of a point with its image by $z$. Such a point exists by proposition \ref{pro:reg}. The transversality assumption implies there exists a neighborhood $U$ of $z$ such that $P\defeq\phi^{-1}(\bX)\cap U$ is a real codimension 1 submanifold  of $Z$. Moreover, restricting $U$ further, $P$ consists only of regular points.  For convenience, we  identify $P$ with is image $\phi(P)$ and $U$ with its image $\phi(U)$.

Observe first that for all $N$ in $\mc N$,  $P\cap \partial_\infty N$  is closed in $P$.

We now  prove that for all $N$ in $\mc N$,  $P\cap \partial_\infty N$   has an empty interior in $P$: assume that $P\cap  \partial_\infty N$ has a non empty interior $O_P$ in $P$. It follows that  $U\cap N$  contains the  real submanifold $P\cap N$ of real codimension 1 in $U$. Since both $U$ and $N$ are complex this implies that $U\cap N$, which is a complex analytic set, has codimension zero hence has a non empty interior.  Since $U$ is open in $Z$ and $Z$ is connected, by analytic continuation, if $U\cap N$ is  open in $U$, then $\phi(Z)\subset N$. This is the contradiction. Thus our hypothesis on $Z$ implies that  $P\cap N$  has empty interior in $P$.  

Since $\mc N$ is countable,  it follows by Baire Theorem  that 
$$
P\cap\left(\bigcup_{N\in\mc N} N\right)
$$
has empty interior in $P$. In particular its complementary is  nonempty which is what we wanted to prove.
\end{proof}

\section{Horospherical convergence and curves}
We now restrict ourselves to $(Z,\phi)$, with $Z$ being a curve, satisfying hypothesis $(*)$.
\subsection{Horospherical convergence}
We say a sequence $\seq{y}$ in $Z$ converges {\em horospherically} to the regular point $y$ if the exists a 1-parameter unipotent subgroup $(\uu_t)_{t\in\mathbb R}$ fixing $\phi(y)$ such that 
\begin{enumerate}
	\item $\phi(y_p)$ is a point in $X$ for all $p$,
	\item $\seq{y}$ converges to $y$
	\item $\seqp{\uu_{t_p}(\phi(y_p)}$ converges to a point $y_\infty$ in $X$ for some diverging sequence $\seq{t}$.
\end{enumerate}

\subsection{Good position at a regular point}
Let $y$ be a regular point in $Z$.   
A {\em good position model} is an upper half plane model of $X$ such that   
\begin{enumerate}
	\item $\phi(y)=(0,\ldots,0)\ ,$
	\item $\im (\T_{y}\phi)= 
	\{(z_1,\ldots,z_n)\mid z_2=\ldots=z_n=0\}\eqdef L_0\ .$
	 \end{enumerate}

\begin{lemma}[\sc Good position at a regular point]\label{lem:gp}
After a projective transformation, we can assume that a regular point is in a good position model.
\end{lemma}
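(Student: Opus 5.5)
The plan is to reduce the statement to the transitivity result of Lemma \ref{lem:trans} with $m=1$, and then to pass to an upper half plane model adapted to the resulting configuration.

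Let $y$ be a regular point of the curve $Z$. Since $\T_y\phi$ is injective and $Z$ has complex dimension one, $\im(\T_y\phi)$ is a complex line in $\T_{\phi(y)}\bP(E)$. Hence there is a unique complex projective line $L=\bP(F)\subset\bP(E)$, with $\dim F=2$, that passes through $\phi(y)$ and is tangent to $\phi$ there, so that $\T_{\phi(y)}L=\im(\T_y\phi)$. Regularity says that this image is transverse to $\bX$ at $\phi(y)$. Therefore the pair $(L,\phi(y))$ is a pair as in Lemma \ref{lem:trans}: $L$ is a complex projective line, and it meets $\partial_\infty X$ transversely at $\phi(y)$. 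In terms of the proof of that lemma, $q\vert_F$ has signature $(1,1)$ and $\phi(y)$ is a lightlike line of $F$.

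Next, fix one upper half plane model and consider the reference pair $(L_0,0)$. Here $L_0$ is the closure in $\bP(E)$ of the line $\{z_2=\cdots=z_n=0\}$, and $0=(0,\ldots,0)$. The step that needs checking is that $0\in\mc P$ lies in $\partial_\infty X$ and that $L_0$ is transverse to it there. This follows from the formula for $\T_u\mc P$ in the paper: at $u=0$, the tangent space is $\{\Re(v_1)=0\}$, and this does not contain the vector $(1,0,\ldots,0)$ tangent to $L_0$. So $(L_0,0)$ is also an admissible pair. By Lemma \ref{lem:trans} there is $g\in\mG$ with $g(L)=L_0$ and $g(\phi(y))=0$. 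Since $g$ is a projective transformation, it preserves tangent lines. Hence $\im(\T_y(g\circ\phi))=\T_0L_0$, which is exactly $\{z_2=\cdots=z_n=0\}$.

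Replacing $\phi$ by $g\circ\phi$ does not affect hypothesis $(*)$, regularity, or any of the later statements, because $g$ preserves $X$ and $\partial_\infty X$. So we may assume $y$ is in a good position model.

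The only genuine obstacle is confirming that the chosen reference pair in the upper half plane model is indeed transverse at $0$, which is the tangent-space computation above. Everything else is formal.
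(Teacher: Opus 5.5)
Your proof is correct and follows the paper, whose entire argument is to invoke Lemma \ref{lem:trans} with $m=1$. You also check explicitly two points the paper leaves implicit: that the projective line tangent to $\phi$ at $y$ is an admissible pair, and that the reference pair $(L_0,0)$ in the upper half plane model is transverse at $0$.
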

\begin{proof}
This follows from lemma \ref{lem:trans}.	
\end{proof}

Obviously,
\begin{lemma}
Let $x_0$ be a point in $L_0\cap X$, then the sequences $\seqt{\uu_t(x_0)}$ in  $L_0$ are converging to $\phi(y)$ both when $t$ goes to $\infty$ and to $-\infty$.	
\end{lemma}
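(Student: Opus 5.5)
Since $x_0$ lies in $L_0\cap X$, its coordinates have the form $(z_1,0,\ldots,0)$ with $\Re(z_1)<0$; the inequality holds because $x_0\in\mc H$ and the quadratic terms vanish on $L_0$. By Proposition \ref{pro:unip}, the transformation $\uu_t$ preserves $L_0$. More precisely, the explicit formula gives
$$
\uu_t(x_0)=\left(\frac{z_1}{1+itz_1},0,\ldots,0\right).
$$
So the whole orbit stays in $L_0$, and the task reduces to a one-variable computation. Recall that in the good position model $\phi(y)=(0,\ldots,0)$.

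It therefore suffices to show that $w(t)\defeq z_1/(1+itz_1)$ tends to $0$ as $t\to\pm\infty$. Write $z_1=a+ib$ with $a<0$. Then
$$
1+itz_1=(1-tb)+ita,
$$
and hence
$$
\vert 1+itz_1\vert\geq \vert t\vert\,\vert a\vert .
$$
Since $a\neq 0$, this lower bound tends to $+\infty$ as $\vert t\vert\to\infty$. Consequently
$$
\vert w(t)\vert\leq \frac{\vert z_1\vert}{\vert t\vert\,\vert a\vert}\longrightarrow 0
$$
in both directions. This shows that $\uu_t(x_0)\to(0,\ldots,0)=\phi(y)$ in $\mathbb C^n$ as $t\to\pm\infty$.

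An alternative check uses the conjugation by $\Psi$ from the proof of Proposition \ref{pro:unip}, under which $\uu_t$ becomes the translation $z_1\mapsto z_1+it$. The orbit of $\Psi(x_0)$ then escapes to infinity in the $z_1$-coordinate, and applying $\Psi$ again brings it back to the origin, because $\Psi$ sends $z_1$ to $1/z_1$.

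There is no real obstacle here. The only point requiring care is the strict inequality $\Re(z_1)<0$, which guarantees that the denominator grows linearly in $\vert t\vert$. The convergence takes place in the topology of $\mathbb C^n\supset\bar{\mc H}$, that is, towards the boundary point $\phi(y)\in\partial_\infty X$.
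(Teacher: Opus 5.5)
Your proof is correct and is the computation the paper has in mind; the paper offers no argument beyond ``Obviously''. On $L_0$ the orbit is $\bigl(z_1/(1+itz_1),0,\ldots,0\bigr)$, and $\Re(z_1)<0$ gives $\vert 1+itz_1\vert\geq \vert t\vert\,\vert\Re(z_1)\vert$, so $\uu_t(x_0)$ tends to the origin $=\phi(y)$ as $t\to\pm\infty$.
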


For $y$ a regular point, 
we then  denote also by $\mU=\{\uu_t\}_{t\in\mathbb R}$, the 1-parameter group of unipotent element fixing $\phi(y)$ and $\im(\T_y\phi)$ constructed for proposition \ref{pro:unip} and call it {\em associated to $y$}.

\subsection{Graphs and unipotent}
Let $\phi$ be a holomorphic map of the unit disc in $X$ such that $\phi(0)$ lies in $\bX$ and $\T_0\phi$ is transverse to $\bX$. Up to some change of coordinates and reparametrisation of the source, we can consider  $Z$ to be  an open neighborhood of $0$ in $L_0$ and $\phi$ to be the  the map
$$
z\mapsto(z,z^2f_2(z),\ldots,z^2f_n(z))\ ,
$$
where the $f_i$ are complex analytic.
\begin{proposition}\label{pro:conv0uni}
	Let $B$ be a ball in $L_0\cap X$. Then 
	$$
	\{\uu_{-t}\circ \phi\circ \uu_t\}_{t\in\mathbb R}
	$$
	converges uniformly to the identity  on $B$, both when $t$ goes to $+\infty$ and $t$ goes to $-\infty$  .
\end{proposition}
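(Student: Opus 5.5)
Our plan is an explicit computation in the upper half plane model. We identify $L_0$ with $\mathbb C$ via $z_1$, so $\phi(z)=(z,z^2f_2(z),\ldots,z^2f_n(z))$. We then conjugate by the projective involution $\Psi$ of Proposition \ref{pro:unip}, in which $\uu_t$ becomes the translation $(w_1,\ldots,w_n)\mapsto(w_1+it,w_2,\ldots,w_n)$. This gives explicit formulas. For $z\in B$ we have $\uu_t(z)=z/(1+itz)$, which tends to $0$ as $t\to\pm\infty$ uniformly on $B$; indeed $\vert \uu_t(z)\vert\le C/\vert t\vert$ for $z\in B$, because $B$ is a compact subset of $L_0\cap X$ and therefore $\Re(z)\le -c<0$ on $B$. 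Thus for $\vert t\vert$ large, $\uu_t(B)$ lies in the domain of $\phi$, and the expression makes sense.

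Write $w=\uu_t(z)$. Then
\[
\phi(w)=(w,\,w^2f_2(w),\ldots,w^2f_n(w)).
\]
Since $\uu_{-t}$ rescales all coordinates by the same scalar, we get
\[
\uu_{-t}(\phi(w))=\frac{1}{1-itw}\,\phi(w).
\]
Now $1-itw=1-\frac{itz}{1+itz}=\frac{1}{1+itz}$, so $\frac{w}{1-itw}=z$. Hence the first coordinate of $\uu_{-t}\circ\phi\circ\uu_t(z)$ is exactly $z$, and the $j$-th coordinate for $j\ge 2$ is
\[
(1+itz)\,w^2f_j(w)=\frac{z^2}{1+itz}\,f_j(w).
\]

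It remains to bound these error terms uniformly. On $B$ the $f_j$ are bounded near $0$, $\vert z\vert$ is bounded, and $\vert 1+itz\vert\ge \vert t\vert\,\vert\Re z\vert\ge c\vert t\vert$ since $\Re z\le -c$ on $B$. Therefore each coordinate with $j\ge 2$ is $O(1/\vert t\vert)$ uniformly on $B$. So $\uu_{-t}\circ\phi\circ\uu_t$ converges uniformly to the inclusion $z\mapsto(z,0,\ldots,0)$, which is the identity of $L_0$, both as $t\to+\infty$ and $t\to-\infty$.

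The main obstacle is justifying the normal form $\phi(z)=(z,z^2f_2,\ldots)$ compatibly with the good position model. The condition $\im\T_0\phi=L_0$ gives, after reparametrising the source by the first coordinate, the form $(z,O(z^2),\ldots)$. This uses only Lemma \ref{lem:gp} and the holomorphic implicit function theorem. The only other point needing care is the uniformity of the lower bound on $\vert 1+itz\vert$, and this is where compactness of $B$ inside the open set $L_0\cap X$ is used.
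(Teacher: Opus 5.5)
Your proposal is correct and is essentially the paper's proof. It uses the same direct computation $\uu_{-t}\circ\phi\circ\uu_t(z)=\bigl(z,\frac{z^2}{1+itz}f_2(\frac{z}{1+itz}),\ldots,\frac{z^2}{1+itz}f_n(\frac{z}{1+itz})\bigr)$, whose last coordinates tend to $0$ uniformly on $B$, and it adds details the paper omits: the bound $\vert 1+itz\vert\ge \vert t\vert\,\vert\Re z\vert$ on the compact ball, the check that $\uu_t(B)$ lies in the domain of $\phi$, and the justification of the normal form (the detour through $\Psi$ is unnecessary but harmless).
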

\begin{proof}
	Indeed let $z$ be a point in $L_0$, since  $$
	\frac{1}{1-it\frac{z}{1+itz}}=1+itz\ .
	$$ then 
\begin{align*}
	&\uu_{-t}\circ \phi\circ \uu_t(z)\\=&\uu_{-t}\left(\frac{z}{1+itz}, \left(\frac{z}{1+itz}\right)^2 f_2\left(\frac{z}{1+itz}\right),\ldots,\left(\frac{z}{1+itz}\right)^2 f_n\left(\frac{z}{1+itz}\right) \right)\\
=&\left(z,\frac{z^2}{1+itz} f_2\left(\frac{z}{1+itz}\right),\ldots, \frac{z^2}{1+itz} f_n\left(\frac{z}{1+itz}\right) \right)\ .
\end{align*}
The result follows since $z\not= 0$ for any $z$ in  $B$.
\end{proof}

\begin{corollary}\label{coro:dd}
Let $B$ be a ball in $L_0\cap X$, then
$$
\lim_{t\to\infty} d(\uu_{t}(z),\phi(\uu_t(z)) =\lim_{t\to-\infty} d(\uu_{t}(z),\phi(\uu_t(z))=0\ ,
$$	
uniformy on $B$ and where $d$ is the complex hyperbolic distance.\end{corollary}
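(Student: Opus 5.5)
The plan is to deduce the corollary from Proposition~\ref{pro:conv0uni} using only that each $\uu_t$ is an isometry of the complex hyperbolic metric. Fix $z$ in $B$. Since $\uu_{-t}$ is an isometry,
$$
d\bigl(\uu_t(z),\phi(\uu_t(z))\bigr)=d\bigl(\uu_{-t}\uu_t(z),\uu_{-t}\circ\phi\circ\uu_t(z)\bigr)=d\bigl(z,\uu_{-t}\circ\phi\circ\uu_t(z)\bigr)\ .
$$
It therefore suffices to show that $d(z,\uu_{-t}\circ\phi\circ\uu_t(z))\to 0$ uniformly for $z$ in $B$, as $t\to\pm\infty$.

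By Proposition~\ref{pro:conv0uni}, the maps $\uu_{-t}\circ\phi\circ\uu_t$ converge to the identity uniformly on $B$ with respect to the Euclidean metric of the upper half plane model. To pass to the hyperbolic distance, I shrink $B$ if necessary, or take it with closure compact in $L_0\cap X$. Then the closed Euclidean $\epsilon$-neighbourhood $C$ of $\bar B$ is a compact subset of $\mc H$ for some $\epsilon>0$. On $C$ the complex hyperbolic metric is a smooth Riemannian metric, so it is bounded above by a constant multiple of the Euclidean metric. Here I use that $C$ is convex, being a neighbourhood of a ball, so Euclidean segments between its points stay in $C$. Hence $d(a,b)\leq c\,\Vert a-b\Vert$ for $a,b$ in $C$.

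For $|t|$ large, uniform Euclidean convergence puts $\uu_{-t}\circ\phi\circ\uu_t(z)$ in $C$ for all $z\in B$. The Lipschitz comparison then gives the uniform hyperbolic convergence.

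The one point needing care is that $\phi\circ\uu_t(z)$ must be well defined, i.e. $\uu_t(z)$ must lie in the domain of $\phi$ near $0$. This holds because $\uu_t(z)=z/(1+itz)\to 0$ uniformly on $B$ as $t\to\pm\infty$, since $z\neq 0$ on $B$ and $B$ is bounded away from $0$. This step was already implicitly used in Proposition~\ref{pro:conv0uni}, and the argument otherwise presents no real obstacle.
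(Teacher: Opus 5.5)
Your proposal is correct and follows the paper's proof. You use isometry invariance of $d$ under $\uu_{-t}$ to reduce to $d(z,\uu_{-t}\circ\phi\circ\uu_t(z))$, then turn the uniform Euclidean convergence of Proposition~\ref{pro:conv0uni} into hyperbolic convergence by comparing the two metrics on a compact subset of $X$; your version is somewhat more careful than the paper's, since it uses only the one-sided bound $d(a,b)\leq c\Vert a-b\Vert$ and checks that $\uu_t(z)$ stays in the domain of $\phi$.
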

\begin{proof} Since $\uu_t$ is an isometry for the hyperbolic distance 
$$
d(\uu_t(z),\phi(\uu_t(z)) =d(z,\uu_{-t}\phi(\uu_t(z))\ .
$$	
However on a compact set in $X$, the complex hyperbolic distance and the euclidean distance are equivalent.
Thus
$$
0=\lim_{t\to\infty}
\Vert z- \uu_t\phi(\uu_t(z)\Vert=\lim_{t\to\infty}d(z, \uu_t\phi(\uu_t(z))\ \ ,
$$
where the first equality comes from proposition \label{pro:conv0euc} and the second by the equivalence of distances. The same holds when $t$ goes to $-\infty$.
\end{proof}

\section{Proof of the Theorem}
\subsection{The limiting set}
We start with a definition. Let $Z$ be a curve.
\begin{definition}[\sc Limiting set]
Let $y$ be  a regular point, the  {\em limiting set} $W_y(Z)$ is the set of elements $\ell$ in  $\Gamma\backslash \bP(X)$ such that there exists a sequence of points  $\seq{x}$ in $Z$ satisfying  \begin{enumerate}
	\item  the sequence $\seq{x}$ converges horospherically to  $y$,
	\item  the sequence $\seqp{\pi(\phi(x_p),\im\T_{x_p}\pi\phi)}$ converges to $\ell$.
	\end{enumerate}	
\end{definition}

Then as a consequence of proposition \ref{pro:conv0uni}, we have

\begin{proposition}
	\label{pro:lem1} Let $(Z,\phi)$ satisfying hypothesis $(*)$ as in section \ref{sec:hypo*}, $y$ a regular point (identified with $\phi(y)$) and $L$ the complex complex affine hypersurface through $y$, such that $\T_yL=\T_y Z$. Then
$$
W_y(L)\subset W_y(Z)\ .	
$$
\end{proposition}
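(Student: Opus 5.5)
Work in a good position model at $y$ (Lemma \ref{lem:gp}): $\phi(y)=0$, $\T_y Z$ has image $L_0=L$, and $\mU=\{\uu_t\}$ is the unipotent group associated to $y$ (Proposition \ref{pro:unip}). Shrinking $Z$ near $y$ and reparametrising, we use the graph form $\phi(z)=(z,z^2f_2(z),\ldots,z^2f_n(z))$ with $z$ ranging in a neighbourhood of $0$ in $L_0$; we write $\phi$ also as a map from $L_0$. Take $\ell\in W_y(L)$. There is then a sequence $\seq{x}$ in $L$, converging horospherically to $y$, with $\pi(x_p,\T_{x_p}L)\to\ell$. By definition there is a diverging sequence $\seq{t}$ with $\uu_{t_p}(x_p)\to w\in X$. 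Since $\uu_t$ preserves $L_0$, $w$ lies in $L_0\cap X$.

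The candidate sequence in $Z$ is $x'_p\defeq x_p$ itself, viewed as a point of the parameter domain, so that $\phi(x'_p)=\phi(x_p)$. Put $z_p\defeq\uu_{t_p}(x_p)$. These lie in a fixed small ball $B\subset L_0\cap X$ around $w$, and $x_p=\uu_{-t_p}(z_p)$.

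\textbf{Step 1: horospherical convergence in $Z$.} Apply Proposition \ref{pro:conv0uni} with the roles of $t$ and $-t$ exchanged; both signs are allowed. Then $\uu_{t_p}\phi(\uu_{-t_p}(z_p))-z_p\to0$ uniformly on $B$, so $\uu_{t_p}(\phi(x_p))\to w\in X$. For large $p$ this also gives $\phi(x_p)\in X$, because $\uu_{t_p}$ preserves $X$ and the limit lies inside $X$. Finally $x_p\to y$. Hence $\seq{x}$ converges horospherically to $y$ in $Z$.

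\textbf{Step 2: convergence of tangent lines.} We must show that $(\phi(x_p),\im\T_{x_p}\phi)$ and $(x_p,\T_{x_p}L)$ have the same limit in $\Gamma\backslash\bP(X)$. Because $\mG$ acts isometrically on $\bP(X)$ (it is $\mc G/\mG_1$ with an invariant metric), it suffices to show that their images under $\uu_{t_p}$ become close. The second image is $(z_p,L_0)$. The first is the $1$-jet at $z_p$ of $g_p\defeq\uu_{t_p}\circ\phi\circ\uu_{-t_p}$, namely $(g_p(z_p),\im \T_{z_p}g_p)$, since $\uu_{-t_p}$ is a biholomorphism of $L_0$. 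Proposition \ref{pro:conv0uni} gives $g_p\to\id$ uniformly on $B$. These maps are holomorphic, so Cauchy estimates on a slightly smaller ball give $C^1$ convergence, hence $\T_{z_p}g_p\to$ the inclusion of $L_0$. On compact sets of $X$ the Euclidean and invariant metrics on $\bP(X)$ are comparable, as in Corollary \ref{coro:dd}. So the two points of $\bP(X)$ are at distance tending to $0$, and projecting by $\pi$ gives $\ell\in W_y(Z)$.

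\textbf{Main obstacle.} The delicate point is justifying the uniformity in Proposition \ref{pro:conv0uni} for the shifted points. The argument $z/(1+itz)$ must stay in the domain of the $f_i$ for all large $|t|$, which holds because $z/(1+itz)\to0$ uniformly on $B$ when $B$ is bounded away from $0$. The derivative convergence also needs the explicit formula to extend holomorphically to a complex neighbourhood of $\bar B$, so that the Cauchy estimates apply.
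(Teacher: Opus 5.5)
Your proposal is correct and follows essentially the same route as the paper. You reuse the same parameter sequence $x_p$ in $Z$, obtain horospherical convergence from the uniform convergence $\uu_{t}\circ\phi\circ\uu_{-t}\to\id$ of Proposition \ref{pro:conv0uni} (the paper phrases this step via Corollary \ref{coro:dd}), and compare the tangent data after translating by $\uu_{t_p}$, using invariance of the metric on $\bP(X)$; you also make explicit two points the paper leaves implicit, namely that $\phi(x_p)\in X$ for large $p$ and that the tangent lines converge, which you justify by Cauchy estimates.
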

Actually, one could prove that we have equality but we will not need it.
\begin{proof}
Let $\ell$ be a point in $W_y(L)$. Let then $\seq{x}$ a  sequence in  $L$ converging horospherically to $y$, such that $\seqp{\pi(x_p),\im\T\pi\Phi(x_p)}$ converges to $\ell$. Let $z_p=\phi(x_p)$. We first observe that by continuity of $\phi$, $\seqt{x_p}$ converges to $\phi(y)$. 

Let $\seq{t}$ be a diverging sequence  such that $\seqp{\uu_{t_p}x_p}$ converges to $x_\infty$ in $X$.

By corollary  \ref{coro:dd}, $\seqp{\uu_{t_p}z_p}$ converges to $x_\infty$ as well. Thus $\seq{z}$ converges horospherically to $y$.

Let $z^1_p\defeq(z_p,\T_{z_p}Z)$ and $x^1_p\defeq (x_p,L)$. By proposition \ref{pro:conv0uni}, denoting $d$ again the hyperbolic distance, 
$$
\lim_{p\to\infty}d(z^1_p,x^1_p)=0\ ,
$$
thus 
$$
\lim_{p\to\infty}d(\pi(z^1_p),\ell)=\lim_{p\to\infty}d(\pi(z^1_p),\pi(x^1_p))=0\ .
$$
It follows that $\ell$ belongs to $W_y(Z)$ and 
this concludes the proof.
\end{proof}
Let $\mc N$ be the (countable) set of totally geodesic submanifold of $X$ whose projection on $Y=\Gamma\backslash X$ is dense.
Finally we prove 
\begin{proposition}\label{pro:wldense}
	Assume that $L$ is a complex line  transverse to $\bX$. Assume that $y$ is a  point in  in $\partial_\infty L$ and $y$  does not belong to  $\partial_\infty N$ for any $N$ in $\mc N$.
	Then $W_y(L)$ is dense.
\end{proposition}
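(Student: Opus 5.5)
By Lemma \ref{lem:trans} and Lemma \ref{lem:gp}, I would work in a good position model in which $y=(0,\ldots,0)$ and $L=L_0$, with the unipotent group $\mU=\{\uu_t\}$ of Proposition \ref{pro:unip} associated to $y$. Note that $\mN$ here should be read as the set of strict complex totally geodesic subspaces whose projection is \emph{closed}, as in Proposition \ref{pro:regnotin}. The plan is to turn the statement into a statement about a unipotent orbit on $\Gamma\backslash\cG$ and then apply Ratner's orbit closure theorem.

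The first step is to identify $W_y(L)$ with a unipotent orbit closure. The leaf $h(L_0\cap X)$ of $\bP(X)$ is the image of an orbit $g_0\mathsf{SU}(1,1)\mG_1$ in $\cG/\mG_1$, and $\mU$ preserves $L_0$. Fix $x_0\in L_0\cap X$. For any sequence $\{s_p\}$ with $|s_p|\to\infty$, I would set $x_p\defeq\uu_{s_p}(x_0)$. Then $x_p\to y$ by the lemma following Lemma \ref{lem:gp}, and $\uu_{-s_p}(x_p)=x_0$, so $\{x_p\}$ converges horospherically to $y$. Moreover $(x_p,\T_{x_p}L_0)=\uu_{s_p}\cdot(x_0,\T_{x_0}L_0)$, since $\uu_{s_p}$ preserves $L_0$. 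Conversely, if $\{x_p\}\subset L_0$ converges horospherically to $y$ with $\uu_{t_p}x_p\to x_\infty$, then $(x_p,\T L_0)$ lies within distance $o(1)$ of $\uu_{-t_p}(x_\infty,\T_{x_\infty}L_0)$, and $x_\infty\in L_0$. Hence $W_y(L)$ contains, and essentially equals, the set of accumulation points in $\Gamma\backslash\bP(X)$ of $\Gamma\uu_t\cdot\ell_0$ as $|t|\to\infty$, where $\ell_0$ runs over lifts of points of $h(L_0)$. Lifting to $\Gamma\backslash\cG$ (fibered over $\Gamma\backslash\bP(X)$ with compact fiber $\mG_1$), I would show that the $\omega$-limit set of $\Gamma g\,\mU$, for a suitable $g$ with $g\mU g^{-1}=\mU$ acting on the right, projects into $W_y(L)$.

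The second step is Ratner's theorem. Since $\Gamma$ is a uniform lattice, the closure of $\Gamma g\mU$ in $\Gamma\backslash\cG$ is $\Gamma g\mH$ for some closed connected subgroup $\mH$ containing $\mU$, with $\Gamma\cap g\mH g^{-1}$ a lattice in $g\mH g^{-1}$. Moreover, by Ratner's uniform distribution theorem, the orbit equidistributes to the $\mH$-invariant measure, so the $\omega$-limit set as $t\to+\infty$ is the whole of $\Gamma g\mH$ and not only the closure. A lattice-bearing connected subgroup generated by unipotents is reductive up to a compact factor, so by Proposition \ref{prop:uug2} the noncompact part of $\mH$ is conjugate to $\mathsf{SU}(1,m)$ and preserves a unique complex totally geodesic $N$ with $0\in\partial_\infty N$, that is, $y\in\partial_\infty N$. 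The lattice condition implies that the projection of $N$ to $\Gamma\backslash X$ is closed.

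The third step excludes the strict case. If $N\neq X$, then $N\in\mN$ with $y\in\partial_\infty N$, contradicting the hypothesis. Hence $m=n$, so $\mH$ contains a conjugate of $\mathsf{SU}(1,n)$ and $\Gamma g\mH$ is all of $\Gamma\backslash\cG$. Projecting to $\Gamma\backslash\bP(X)$ then shows that $W_y(L)$ is dense, and indeed everything. The main obstacle is the first step: making the bookkeeping between $\bP(X)=\cG/\mG_1$, the left $\Gamma$-action and the $\mU$-action precise. Concretely, $\uu_t$ acts on the left on $X$, and this has to be converted into a right action of a conjugate unipotent subgroup on $\Gamma\backslash\cG$ while checking that lifts of $h(L_0)$ are carried along. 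The other delicate point is justifying that the $\mH$ given by Ratner falls under Proposition \ref{prop:uug2} and yields a closed projection of $N$.
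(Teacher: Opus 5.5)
Your proposal is correct and follows essentially the same route as the paper. The paper likewise lifts the $\mU$-orbit of a point over $L$ to $\Gamma\backslash\cG$, applies Ratner's orbit closure theorem, and uses Proposition~\ref{prop:uug2} to obtain a complex totally geodesic $N$ with $y\in\partial_\infty N$ and closed projection. It then gets $N=X$ from the hypothesis (with $\mc N$ rightly read, as you did, as the strict subspaces with closed projection) and observes that the limit points of the orbit lie in $W_y(L)$.

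The only real difference is how you show the limit set fills the space. You invoke Ratner's equidistribution theorem to get the whole of $\Gamma\backslash\cG$ as the $\omega$-limit set. The paper instead uses the elementary fact that a dense orbit of a real one-parameter flow on a compact manifold of dimension at least $2$ has $V_\alpha\cup V_\omega$ equal to the whole space.
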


\begin{proof} Let $\seqt{\uu_t}$ a 1-parameter group of unipotent in $\mG$ fixing $L$ and $y$. Let $z_0$ be a point in $\cG$ projecting to a point in $L$. Let $N_1$ be the $\seqt{\uu_t}$-orbit of $z_0$, $N_1\defeq \{z_t\}_{t\in\mathbb R}$, where $z_t\defeq \uu_{-t}  z_0$.

By Ratner Theorem \cite[Theorem A]{Ratner:1991aa} there exists a reductive group $\mG_2$ containing $\mU$ in $\mG$ and an orbit $N_2$ of $\mG_2$  containing $N_1$, whose projection on $\Gamma\backslash \cG$ is closed and such that the projection of $N_1$  in $(\Gamma\cap \mG_2)\backslash N_2$ is dense. Observe that $\Gamma\cap \mG_2$ is a  uniform lattice in $\mG_2$.

Let $N$ be the unique totally geodesic subspace produced by proposition \ref{prop:uug2}. In particular $\pi(N)$ is closed since $\Gamma\cap\mG_2$ is a uniform lattice in $\mG_2$.  By proposition \ref{prop:uug2}, $y$ belongs to $\partial_\infty N$. Then, using the hypothesis, $N=X$ and thus $\mG_2=\mG$ and $N_2=\cG$. It follows that $\pi(N_1)$ is dense in $\Gamma\backslash \cG$. Since $\pi(N_1)$ is a (real 1-dimensional) curve, we have that  
$$
V_\alpha\cup V_\omega=\Gamma\backslash \cG\ ,
$$
where $V_\alpha$ and $V_\omega$ are the $\alpha$ and $\omega$-limit set of  $\pi(N_1)$. Since  
$$
W_y(L)\supset V_\alpha\cup V_\omega\ , 
$$
the result follows. 
\end{proof}
\subsection{Conclusion: proof of the main theorem}

We see the complex hyperbolic space $X$ as an open set in  $\bP(E)$.
Our result is that
\begin{theorem}
	Let $\Gamma$ a uniform lattice in the group of biholomorphisms of $X$.
	
	Let $Z$ be a connected complex manifold with boundary, $\phi$ a holomorphic map with values in ${\bf P}(E)$ which is an immersion somewhere. Assume that $\phi(Z)$ intersects $X$ and  $\phi(\partial Z)$ lies in ${\bf P}(E)\setminus X$. 
	
Let $\pi$ be the projection from $X$ to $\Gamma\backslash X$, then  $$\pi(\phi(Z)\cap X)\ ,$$ is dense (for the usual topology) in a totally geodesic subspace of  $\Gamma\backslash X$.
\end{theorem}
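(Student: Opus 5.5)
We first reduce to the case where $\phi(Z)$ is not contained in any proper complex projective subspace $M$ with $\pi(M\cap X)$ closed. Choose $M=\bP(F)$ minimal among projective subspaces containing $\phi(Z)$ such that $\pi(M\cap X)$ is closed in $\Gamma\backslash X$; if no proper one exists, take $M=\bP(E)$. Then $M\cap X$ is a complex totally geodesic subspace $X'$, isometric to a complex hyperbolic space of smaller dimension. Its stabiliser $\Gamma'=\Gamma\cap\operatorname{Stab}(M)$ is a uniform lattice in $\mathsf{PU}(q\vert_F)$, since the projection is closed and $\Gamma$ is uniform. Replacing $(X,\Gamma,E)$ by $(X',\Gamma',F)$, the hypotheses survive: $\phi$ is still an immersion somewhere, $\phi(Z)$ meets $X'$, and $\phi(\partial Z)$ misses $X'$. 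The closed subspace $\Gamma'\backslash X'$ maps into $\Gamma\backslash X$ as a closed totally geodesic subspace, and density in it is what we want. So we may assume the hypothesis of Proposition \ref{pro:regnotin} holds, and we must show $\pi(\phi(Z)\cap X)$ is dense in $\Gamma\backslash X$. To work with curves, we restrict $\phi$ to a suitable one-dimensional complex slice through a regular point; we return to this caveat below.

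Proposition \ref{pro:regnotin} gives a regular point $y$ with $\phi(y)\notin\partial_\infty N$ for every $N$ whose projection is closed. By Lemma \ref{lem:gp}, we put $y$ in a good position model, and let $L$ be the complex line tangent to $\phi$ at $y$. Since $L$ is transverse to $\bX$ at $\phi(y)$, Proposition \ref{pro:wldense} applies, with $\mc N$ the countable family of proper closed totally geodesic subspaces used there. Via Ratner's theorem it shows that $W_y(L)$ is dense in $\Gamma\backslash\bP(X)$. By Proposition \ref{pro:lem1}, $W_y(L)\subset W_y(Z)$, so $W_y(Z)$ is dense. By definition, every element of $W_y(Z)$ is a limit of points $\pi(\phi(x_p),\im \T_{x_p}\phi)$ with $\phi(x_p)\in X$. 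Projecting from $\Gamma\backslash\bP(X)$ to $\Gamma\backslash X$, the closure of $\pi(\phi(Z)\cap X)$ contains the image of a dense set, hence is all of $\Gamma\backslash X$.

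The main obstacle is the reduction to curves when $\dim Z=m>1$. The unipotent and graph arguments (Proposition \ref{pro:conv0uni}) are written only for curves. I would first try slicing: take a complex curve $C\subset Z$ through the regular point $y$ transverse to $\phi^{-1}(\bX)$. Then $\phi\vert_C$ satisfies $(*)$, using a small compact disc around $y$ whose boundary maps outside $\bar X$ after shrinking suitably. One must keep $\phi(y)$ outside every $\partial_\infty N$; this is automatic because that condition concerns only the point $\phi(y)$, not $C$. The boundary condition for the slice is delicate, since a small disc's boundary may enter $X$. However, the horospherical limiting argument is local near $y$, so only a germ of curve at $y$ is needed. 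The global hypotheses serve only to produce the good regular point, via Proposition \ref{pro:reg} and Proposition \ref{pro:regnotin}. A second point to check is that the $\alpha$- and $\omega$-limits used in Proposition \ref{pro:wldense} come from sequences converging horospherically to $y$. This follows from the lemma that $\uu_t(x_0)\to\phi(y)$ as $t\to\pm\infty$.
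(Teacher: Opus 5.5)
Your proposal is correct. It differs from the paper's proof only in how it reduces to curves.

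\textbf{The paper's route.} It slices globally. It takes projective subspaces $F$ of codimension $p-1$ in an open family $\mathcal F_0$, so that $Z_F=\phi^{-1}(F)$ is a compact curve with $\partial Z_F\subset\partial Z$ and therefore again satisfies $(*)$. It then runs a Baire argument over $\mathcal F_0$: each $\mathcal F(N)$ is closed with empty interior, by analytic continuation. This yields a slice with $\phi(Z_F)$ not contained in any $N\in\mathcal N$. Only then does it apply Proposition \ref{pro:regnotin} to that curve to obtain the good regular point.

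\textbf{Your route.} You apply Propositions \ref{pro:reg} and \ref{pro:regnotin} directly to $Z$. This is legitimate because both are proved for arbitrary $\dim Z=m$: the first by Stokes with $\beta\wedge\omega^{m-1}$, the second by the real-hypersurface and analytic-continuation argument. You then pass to any curve germ $C$ through $y$ with $T_y\phi(T_yC)$ transverse to $\bX$. This is sound for two reasons:
\begin{enumerate}
\item Proposition \ref{pro:wldense} depends only on the point $\phi(y)$ and the line $L$.
\item The proofs of Proposition \ref{pro:conv0uni}, Corollary \ref{coro:dd} and Proposition \ref{pro:lem1} use only the graph form of $\phi(C)$ over $L$ near $\phi(y)$. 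The hypothesis $(*)$ in the statement of Proposition \ref{pro:lem1} is never used in its proof.
\end{enumerate}

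\textbf{What each buys.} Your route is shorter. It avoids the Baire argument on $\mathcal F_0$ and the transversality checks needed to make $Z_F$ a compact curve with boundary inside $\partial Z$. It also makes explicit that the conclusion is local: any holomorphic curve germ transverse to $\bX$ at a point outside $\bigcup_{N\in\mathcal N}\partial_\infty N$ already has dense projection. The global hypotheses serve only to produce such a point. The paper's route instead keeps every intermediate object inside hypothesis $(*)$, so its lemmas apply exactly as stated.

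\textbf{Two corrections.}
\begin{enumerate}
\item Drop the sentence claiming that $\phi\vert_C$ satisfies $(*)$ on a small disc around $y$. Since $\phi(C)$ crosses $\bX$ transversally at $\phi(y)$, the boundary circle of every such disc enters $X$, so $(*)$ always fails there. It ``may'' enter is too weak. Your germ argument makes this claim unnecessary in any case.
\item Before the slicing, ``the complex line tangent to $\phi$ at $y$'' should read $T_y\phi(T_yC)$ when $m>1$.
\end{enumerate}
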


\begin{proof} 
We can safely assume that $\phi(Z)$ is not included in any $N$ in $\mc N$. Otherwise, we would work on $N_0$ which is the complex hyperbolic subspace of the lowest possible dimension projecting to a closed totaly geodesic subspace in $\Gamma\backslash X$.

By proposition \ref{pro:regnotin}, there exists a regular element $y$ in $Z$, such that $z\defeq\phi(y)$ does not belong to 
 $$
 \bigcup_{N\in\mc N}\partial_\infty N\ .$$  
 \vskip 0,5 truecm
\noindent{\sc First Step: dimensional reduction.}
Our first step is to reduce to the case  when $Z$ has complex dimension 1. 
 Let $p$ be the dimension of $Z$ and assume $p>1$ for the moment. 
 Let  $\mc F$ the set of complex projective spaces of codimension $p-1$ and intersecting $X$. 
For every $F$ in $\mc F$, let $Z_F\defeq \phi^{-1}(F)$.

There is an non empty open set $\mc F_0$ in $\mc F$, such that for any $F$ in $\mc F_0$, 
\begin{enumerate}
	\item $\phi$ is transverse to $F$,
	\item $\phi(Z_F)$ intersects $X$.
\end{enumerate}
and thus $Z_F$ is a curve with $\partial Z_F\subset \partial Z$. In particular $(Z_F,\phi\vert_{Z_F})$ satisfy hypothesis $(*)$ of definition \ref{sec:hypo*} for all $F$ in $\mc F_0$.   

For every $N$ in $\mc N$, let us consider the following subset of $\mc F_0$.
$$
{\mc F}(N)=\{F\in \mc F_0\mid \phi(Z_F)\subset N\}.
$$ 
Observe now for any $N$ in $\mc N_0$, the subset  ${\mc F}(N)$ is closed: indeed let $\seq{F}$ be a sequence in  ${\mc F}(N)$ converging to $F$ in $\mc F_0$, then for any point $z$ in $Z_F$ there is a sequence of point $\seq{z}$ with $z_p$ in $Z_{F_p}$ converging to $z$. Hence $z$ belongs to $N$, and since this is true for all $z$ in $Z_F$, $Z_F$ is a subset of $N$.

Furthermore assume that   
${\mc F}(N)$ contains an open set $O$, then the following subset of $Z$
$$
U\defeq \bigcup_{F\in O}Z_F 
$$
contains an open set. Since $U\subset N$, by analytic continuation this implies that $\phi(Z)\subset N$ hence that $N=\bP(E)$. 

If follows that by Baire Theorem
$$
\bigcup_{N\in\mc N}{\mc F}(N)\ ,
$$
 is a Baire meagre set. In particular, its complementary $O_{\mc N}$ in $\mc F_0$ is nonmeagre and thus non empty. For $F$ in  $O_{\mc N}$ we have by construction that $(Z_F,\phi\vert_{Z_F})$ is a curve satisfying hypothesis $(*)$ and not included in a 
 $$
\bigcup_{N\in\mc N} N\ .
 $$
It is enough to prove that $\pi(\phi(Z_F))$ is dense. In other words we have reduced to the case of curves. 
\vskip 0,5 truecm
\noindent{\sc Final Step:} We now can assume $Z$ is a curve. Let $z$ be a point in $\bX$ and not in 
$$
\bigcup_{N\in\mc N}\partial_\infty N\ .
$$ 
Let $L$ be any complex line through $z$ transverse to $\bX$. By proposition \ref{pro:wldense}, the limiting set $W_z(L)$ is dense. Take now $z=\phi(y)$ where $y$ is regular in $Z$. Let now $L=\im(\T_y\phi)$ which is transverse at $y$ to $\bX$ since $y$ is regular. Then by proposition \ref{pro:lem1},
$W_z(L)\subset W_y(Z)$. Thus $W_y(Z)$ is dense. We conclude  by noticing that, by construction,  $p(W_y(Z))$ is included in the closure of  $\pi(\phi(Z)\cap X)$.
	\end{proof}

\renewcommand{\MR}[1]{}
\bibliographystyle{amsplain}

\begin{thebibliography}{1}

\bibitem{Bourbaki:owAvyv1m}
Nicolas Bourbaki, \emph{{Lie groups and Lie algebras. Chapters 7--9}}, Elements of Mathematics (Berlin), Springer-Verlag, Berlin, 2005.

\bibitem{Hardt:1983aa}
Robert~M. Hardt, \emph{Some analytic bounds for subanalytic sets}, Differential geometric control theory ({H}oughton, {M}ich., 1982), Progr. Math., vol.~27, Birkh\"auser Boston, Boston, MA, 1983, pp.~259--267. \MR{708507}

\bibitem{Hironaka:1964aa}
Heisuke Hironaka, \emph{Resolution of singularities of an algebraic variety over a field of characteristic zero. {I}}, Annals of Mathematics \textbf{79} (1964), no.~1, 109--203.

\bibitem{Hironaka:1964ab}
\bysame, \emph{Resolution of singularities of an algebraic variety over a field of characteristic zero. {II}}, Annals of Mathematics \textbf{79} (1964), no.~2, 205--326.

\bibitem{Klingler:2016aa}
Bruno Klingler, Emmanuel Ullmo, and Andrei Yafaev, \emph{The hyperbolic {A}x-{L}indemann-{W}eierstrass conjecture}, Publ. Math. Inst. Hautes \'Etudes Sci. \textbf{123} (2016), 333--360. \MR{3502100}

\bibitem{Mok:2019aa}
Ngaiming Mok, Jonathan Pila, and Jacob Tsimerman, \emph{Ax-{S}chanuel for {S}himura varieties}, Ann. of Math. (2) \textbf{189} (2019), no.~3, 945--978. \MR{3961087}

\bibitem{Ratner:1991aa}
Marina Ratner, \emph{Raghunathan's topological conjecture and distributions of unipotent flows}, Duke Mathematical Journal \textbf{63} (1991), no.~1, 235--280. \MR{1106945}

\bibitem{Ullmo:2014aa}
Emmanuel Ullmo and Andrei Yafaev, \emph{Hyperbolic {A}x-{L}indemann theorem in the cocompact case}, Duke Math. J. \textbf{163} (2014), no.~2, 433--463. \MR{3161318}

\end{thebibliography}
\providecommand{\bysame}{\leavevmode\hbox to3em{\hrulefill}\thinspace}
\providecommand{\MR}{\relax\ifhmode\unskip\space\fi MR }
\providecommand{\MRhref}[2]{%
  \href{http://www.ams.org/mathscinet-getitem?mr=#1}{#2}
}
\providecommand{\href}[2]{#2}

\end{document}